\newtheorem{theorem}{Theorem}[section]
\newtheorem{proposition}{Proposition}[section]
\theoremstyle{definition}
\theoremstyle{remark}
\newtheorem{remark}[theorem]{Remark}
\numberwithin{equation}{section}
\theoremstyle{definition}
\begin{document}

\title[A SL scheme for ES-BGK model]{A conservative semi-Lagrangian scheme for the ellipsoidal BGK model of the Boltzmann equation}

\author[S. Boscarino]{Sebastiano Boscarino}
\address{Sebastiano Boscarino\\
Department of Mathematics and Computer Science  \\
University of Catania\\
95125 Catania, Italy} \email{boscarino@dmi.unict.it}

\author[S. Y. Cho]{Seung Yeon Cho}
\address{Seung Yeon Cho\\
Department of Mathematics\\
Gyeongsang National University\\ 
52828 Jinju, Republic of Korea}
\email{chosy89@gnu.ac.kr}

\author[G. Russo]{Giovanni Russo}
\address{Giovanni Russo\\
Department of Mathematics and Computer Science  \\
University of Catania\\
95125 Catania, Italy} \email{russo@dmi.unict.it}

\author[S.-B. Yun]{Seok-Bae Yun}
\address{Seok-Bae Yun\\
Department of Mathematics\\ 
Sungkyunkwan University\\
 Suwon 440-746, Republic of Korea}
\email{sbyun01@skku.edu}

\begin{abstract}

In this paper, we propose a high order conservative semi-Lagrangian scheme (SL) for the ellipsoidal BGK model of the Boltzmann transport equation. To avoid the time step restriction induced by the convection term, we adopt the semi-Lagrangian approach. For treating the nonlinear stiff relaxation operator with small Knudsen number, we employ high order $L$-stable diagonally implicit Runge-Kutta time discretization or backward difference formula. The proposed implicit schemes are designed to update solutions explicitly without resorting to any Newton solver. We present several numerical tests to demonstrate the accuracy and efficiency of the proposed methods. These methods allow us to obtain accurate approximations of the solutions to the Navier-Stokes equations or the Boltzmann equation for moderate or relatively large Knudsen numbers, respectively.

\end{abstract}
\maketitle
%\tableofcontents

\footnotetext{\keywords{Keywords and phrases: ellipsoidal BGK model, Boltzmann equation, semi-Lagrangian scheme, kinetic theory of gases}}

\section{Introduction}
%\linenumbers

When describing the motions of rarefied gases in the high altitude, we frequently encounter the situation where gases are in the thermal non-equilibrium. In the situation, continuum equations such as Navier-Stokes equations (NSEs) fail to capture the correct behavior of gases. To address this issue, kinetic models such as the Boltzmann transport equation (BTE) have been widely adopted for predicting the behavior of gases in such regimes. The equation reads
\begin{align}\label{BTE}
	\begin{split}
		\frac{\partial{f}}{\partial{t}} + v \cdot \nabla_x{f} &=Q(f,f),
	\end{split}
\end{align}
where $f=f(x,v,t)$ is the velocity distribution function defined on phase point $(x,v)\in\mathbb{R}^3\times \mathbb{R}^3$ at time $t$ and $Q(f,f)$ is the Boltzmann collision operator which takes the form of five-fold integral. In order to reduce the computational costs from the Boltzmann collision operator, there have been numerous techniques. Fourier-Galerkin method is one of the most popular approach \cite{MP,PP,RP}.

On the other hand, instead of treating BTE directly, simpler approximation models for BTE have been proposed as an alternative by replacing the Boltzmann collision operator with relaxation-type operators. The BGK model \cite{BGK} is the most well-known model, which substitute the Boltzmann collision operator with a relaxation towards local equilibrium: 
\begin{align}\label{BGK}
	\begin{split}
		\frac{\partial{f}}{\partial{t}} + v \cdot \nabla_x{f} &= \frac{\tau}{\varepsilon}\left(\mathcal{M}(f)-f\right),
	\end{split}
\end{align}
where $\mathcal{M}(f)$ is so-called local Maxwellian. The relaxation time $\tau$ depends on macroscopic quantities $\rho$ and $T$, and the parameter $\varepsilon$ is the Knudsen number defined as the ratio between the mean free path of gas molecules and the physical length scale of the problem of interest. Although the BGK model still maintains important properties of the BTE collision operator, its main defect is that the BGK solution fails to reproduce the correct Prandtl number (Pr) of monatomic gases. The Prandtl number for the BGK model is fixed by one, however, that of monatomic gases such as Helium or Argon is close to $2/3$, which is also consistent to the one derived from BTE for hard sphere molecules.
%The Prandtl number is defined as the ratio between the viscosity and the heat conductivity.

To treat this issue, a generalized version of the BGK model has been suggested by Holway \cite{H} by replacing the local Maxwellian with an ellipsoidal Gaussian that has a free parameter $-1/2\leq v<1$. This model is called the ellipsoidal BGK model (ES-BGK model):
\begin{align}\label{A-1}
\begin{split}
\frac{\partial{f}}{\partial{t}} + v \cdot \nabla_x{f} &= \frac{\tau}{\varepsilon}\left(\mathcal{G}(f)-f\right),
%\cr f(x,v,0) &= f_0(x,v).
\end{split}
\end{align}
where
\begin{align}\label{conti eg}
	\mathcal{G}(f)(x,v,t):=\frac{\rho(x,t)}{\sqrt{\det\left(2 \pi \mathcal{T}(x,t) \right)}}\exp \left({-\frac{(v-U(x,t))^{\top}(\mathcal{T}(x,t))^{-1}(v-U(x,t))}{2}}\right).
\end{align}
The macroscopic local density $\rho(x,t)$, bulk velocity $U(x,t)$, stress tensor $\Theta(x, t)$ and total energy $E(x, t)$ are defined as follows:
\begin{align*}
\begin{split}
\rho(x,t)&:= \int_{\mathbb{R}^3}f(x,v,t)dv,\cr
\rho(x,t) U(x,t) &:= \int_{\mathbb{R}^3}vf(x,v,t)dv,\cr
\rho(x,t) \Theta(x,t) &:= \int_{\mathbb{R}^3}\left(v-U(x,t)\right)\otimes\left(v-U(x,t)\right) f(x,v,t)dv,\cr
E(x,t) &:= \int_{\mathbb{R}^3}\frac{1}{2}|v-U(x,t)|^2f(x,v,t)dv.
\end{split}
\end{align*}
The temperature tensor $\mathcal{T}(x,t)$ is given by
\begin{align*}
\mathcal{T}(x,t)&=\nu \Theta(x,t) + (1-\nu)T(x,t)I,
\end{align*}
where $I$ is a $3\times 3$ identity matrix. Note that when $\nu=0$, ES-BGK model \eqref{A-1} reduces to BGK model \eqref{BGK}, and the Gaussian reduces to the local Maxwellian
\begin{align}\label{M}
	\mathcal{M}(f)(x,v,t):=\frac{\rho(x,t)}{\sqrt{2 \pi T(x,t)}^3}\exp \left({-\frac{|v-U(x,t)|^2}{2T(x,t)}}\right).
\end{align}
Similar to BTE and BGK model, the relaxation operator of ES-BGK model has the same five-dimensional collision invariants $1$, $v$, $|v|^2/2$:
\begin{align} \label{cancellation}
	 \int_{\mathbb{R}^3} \big(\mathcal{G}(f)- f\big) \begin{pmatrix}
	 	1\\v\\\frac{1}{2}|v|^2
	 \end{pmatrix} dv=0,
\end{align}
which implies the conservation of mass, momentum and energy.
%:
%\[
%\frac{d}{dt}  \int_{\mathbb{T}^d\times\mathbb{R}^3} f \phi(v) dxdv=0.
%\]
Moreover, $H$-theorem was also verified for ES-BGK model in \cite{ALPP} (See also \cite{Y})
\[
\frac{d}{dt}\int_{\mathbb{R}^3}f\ln{f}dv =  \int_{\mathbb{R}^3} \left(\mathcal{G}(f)-f\right) \ln{f} dv  \leq 0.
\]

The hydrodynamic models consistent with ES-BGK model can be derived from the Chapmann-Enskkog expansion $f=f^0 + \varepsilon f^1 + \varepsilon^2f^2 + \cdots$. The first order approximation gives Navier-Stokes equations:
\begin{align}\label{NS}
\begin{split}
		&\frac{\partial \rho}{\partial t} + \nabla_{x} \cdot (\rho u) = 0, \vspace*{0.2 cm}\\
	&\frac{\partial}{\partial t} (\rho u) + \nabla_{x} \cdot (\rho\, u \otimes u + pI)  = \varepsilon\, \nabla_{x} \cdot \left(\mu \sigma(u)\right) \,, \vspace*{0.2 cm}\\
	&\frac{\partial}{\partial t} E + \nabla_{x} \cdot \left( \left( E+p \right) u \right) \vspace*{0.2 cm}=
	\varepsilon\, \nabla_{x} \cdot \left(  \mu \sigma(u) u + \kappa \nabla_x T \right),
\end{split}
\end{align}	
where pressure and strain rate tensor are given by
$$p=\rho T,\quad \sigma(u)= \nabla_x u + (\nabla u)^T - \frac{2}{d_v} \nabla_x \cdot u I.$$ The viscosity $\mu$ and heat conductivity $\kappa$ are 
\[
\mu=\frac{1}{1-\nu}\frac{p}{\tau},\quad \kappa=\frac{d_v+2}{2}\frac{p}{\tau},
\]
and hence the Prandtl number is
\[
\text{Pr}=\frac{d_v+2}{2}\frac{\mu}{\kappa}=\frac{1}{1-\nu}.
\]
Thus, by taking $\nu=-\frac{1}{2}$, one can reproduce $\text{Pr}=\frac{2}{3}$. Note that when $\nu=0$ ES-BGK model reduces to BGK model, which further leads to $\text{Pr}=1$. Therefore, suitable choices of $\nu$ and $\tau$ result in the desired hydrodynamic limit at the Navier-Stokes level.

%%%%%%%%%%%%%%%%%%%%%%%%%%%%%%%%%%%%%%%%%
In this paper, we introduce a class of high-order conservative finite-difference semi-Lagrangian methods for the ES-BGK model. The idea of SL method is to integrate the solution along the associated characteristic curve. Although this nature make it necessary to
compute solutions on off-grid points, the method enables one to avoid CFL-type time step restriction from the convection term. 
Due to this benefit, the SL approach has been widely adopted in the context of numerical methods for Boltzmann-type equations \cite{BCR2,CBRY2,BCGR,CGQRY} or Vlasov-type models \cite{CMS,FSB,QC,SRB}, and for the development of SL methods, a key part is to choose suitable reconstruction.

In particular, when treating BTE or its BGK-type approximation models, two main features of solutions should be taken into account for reconstruction. First, solutions of such models could evolve to form a shock within a finite time. Thus, simple high order Lagrange interpolation may be inadequate because it may produce oscillations near discontinuities. The problem can be mitigated by adopting a WENO methodology. Second, conservative variables such as mass/momentum/energy should be preserved. However, nonlinear weights used for a WENO-type method may lead to loss of conservation. 

To address two issues altogether, a conservative reconstruction technique \cite{CBRY1} has been proposed in our recent paper \cite{CBRY1}, and adopted for the construction of high order conservative SL methods for BGK models for monoatomic \cite{CBRY2} and mixtures \cite{BCGR}, Vlasov Poisson system \cite{CBRY2}, and BTE \cite{BCR2}. The conservative reconstruction technique is based on the integration of the so-called basic reconstructions. A good candidate for the basic reconstruction is CWENO polynomials \cite{C,LPR} which guarantee the uniform accuracy within a cell. We confirmed that the reconstruction allows us to attain high order accuracy, maintain conservation under shifted summation, avoid spurious oscillations near discontinuities (see \cite{CBRY1}).

On top of previously mentioned strategy, we will employ a $L^2$-projection technique \cite{BCR1} to prevent loss of conservation that arises from continuous Gaussian function. More precisely, the problem occurs when the number of velocity girds for velocity discretization is insufficient to resolve the shape of Gaussian and reproduce its moments. This technique enable us to secure conservation even when the number of velocity grid is insufficient.

For time discretization, we will adopt $L$-stable diagonally implicit Runge-Kutta method or backward difference formula. Although a class of implicit semi-Lagrangian method for BGK model has been already proposed, it is non-trivial to extend the approach to ES-BGK model directly. This is because the Gaussian function involves non-conservative quantity such as temperature tensor, which needs to be dealt with implicitly. We will introduce how to compute the temperature tensor without using a Newton solver. We remark that a similar technique has been adopted in the development of Eulerian based methods. One is the first order scheme \cite{FJ} and the other is high order method \cite{H}. In the context of SL, we refer to \cite{RY}.

Finally, we will present numerical evidences to demonstrate the benefits of ES-BGK model and SL approach based on the comparison numerical solutions to ES-BGK model with the compressible Navier-Stokes equations and BTE.

The outline of this paper is as follows: In section \ref{sec numerical method}, we derive the semi-Lagrangian methods for the ES-BGK model. Next, in section \ref{sec property}, we study its mathematical properties. Then in Section \ref{sec numerical test}, we will present various numerical tests to demonstrate the performance of the proposed methods. Finally, the conclusion of this paper is presented.

%%%%%%%%%%%%%%%%%%%%%%%%%%%%%%%%%%%%%%%%%%%%%%%%%%%%%%%%%%%%%%%%%%%%%%%%%%%%%%%%%%%%%%%%%%%%%%%%%%%%%%%%%%%%%%%%%%%%%%%%%%%%%%%%%%%%%%%%%%%%%%%%%%%%%%%%%%%%%%%%%%%%%%%%%%%%%%%%%%%%%%%%%%%%%%%%%%%%%%%%%%%%%
%%%%%%%%%%%%%%%%%%%%%%%%%%%%%%%%%%%%%%%%%%%%%%%%%%%%%%%%%%%%%%%%%%%%%%%%%%%%%%%%%%%%%%%%%%%%%%%%%%%%%%%%%%%%%%%%%%%%%%%%%%%%%%%%%%%%%%%%%%%%%%%%%%%%%%%%%%%%%%%%%%%%%%%%%%%%%%%%%%%%%%%%%%%%%%%%%%%%%%%%%%%%%

\section{Derivation of the SL method}\label{sec numerical method}
	In this section, we describe how to derive semi-Lagrangian methods for ES-BGK model. For simplicity, we assume one-dimension in space and three-dimensions in velocity ($d_v=3$) for the derivation.
\subsection{Notation}
Let us consider computational domain $[0,T_f]\times [x_L,x_R] \times [v_{min},v_{max}]^{3}\in \mathbb{R}\times \mathbb{R}\times \mathbb{R}^3$. We discretize the time variable as
\begin{align*}
	t^n&=n\Delta t, \quad n=0,1,\dots,N_t,
\end{align*}
where $N_t\Delta t= T$. For spatial discretization, we use the uniform grid with $\Delta x=\frac{x_R-x_L}{N_x}$. For the periodic boundary conditions,
we adopt 
\begin{align*}
	x_i&=x_L + i \Delta x, \quad i=0,\cdots,N_x-1,
\end{align*}
while for the free-flow boundary conditions we set 
\begin{align*}
	x_i&=x_L + \left(i+\frac{1}{2}\right) \Delta x, \quad i=0,\cdots,N_x-1.
\end{align*}
To choose velocity grids, we discretize the truncated velocity domain $[v_{min},v_{max}]^{3}$ with the same size of mesh $\Delta v$ in each direction:
\begin{align*}
	v_j&=v_{min}(1,1,1)^T + (j_1 \Delta v,j_2 \Delta v,j_3 \Delta x)^T, \quad 0 \leq j_1,j_2,j_3\leq N_v,
\end{align*}
so that $N_v \Delta v=v_{max}-v_{min}$. For brevity, we set $v_{min}=-v_{max}$ in the rest of this paper. The numerical solution of $f$ at $(x_i,v_j,t^n)$ will be denoted by $f_{i,j}^n$. Similarly, we define $\mathcal{G}_{i,j}^n$ as
	\begin{align*}
		\mathcal{G}_{i,j}^n&=\frac{\rho_i^n}{\sqrt{\det\left(2 \pi \mathcal{T}_i^n \right)}}\exp \left({-\frac{(v_j-U_i^n)^{\top}(\mathcal{T}_i^n)^{-1}(v_j-U_i^n)}{2}}\right),
	\end{align*}
	where discrete macroscopic variables at $(x_i,t^n)$ are defined by
	\begin{align*} %\label{tensor n}
			\left(\rho_i^n,\rho_i^n U_i^n,E_i^n,\frac{3}{2}\rho_i^nT_i^n\right)&:= \sum_{j} f_{i,j}^n\left(1,v_j,\frac{1}{2}|v_j|^2, \frac{1}{2}|v_j-U_i^n|^2\right) (\Delta v)^{3},\\
			\rho_i^n\Theta_i^n &:= \sum_{j} f_{i,j}^{n}(v_j-U_i^{n}) \otimes (v_j-U_i^{n})(\Delta v)^{3},\\
			\Sigma_i^n&:= \sum_{j} f_{i,j}^{n}v_j \otimes v_j(\Delta v)^{3},\\
			\mathcal{T}_i^n&:=\nu  \Theta_i^n + (1-\nu)T_i^nI.
	\end{align*}

	\subsection{Derivation of the first order scheme} 
	Let us consider the characteristics of ES-BGK model \eqref{A-1}:
	\begin{align}\label{characteristic}
		\begin{split}
		\frac{df}{dt}&=\frac{\tau}{\varepsilon}\left(\mathcal{G}(f)-f\right),\cr
		\frac{dx}{dt}&=v,\\
		x(0)&=\tilde{x},\quad f(x,v,0)=f^0(x,v), \quad t\geq 0, \quad x\in \mathbb{R},\,v\in \mathbb{R}^3,
		\end{split}
	\end{align}
where $\tilde{x}$ denotes the characteristic foot and $f^0(x,v)$ is the given function. As in \cite{RY}, we use the collision invariants $1, v, v^2/2$ for the deriviation of our method. 
%
%The characteristic curves associated to the ES-BGK model \eqref{A-1} are the solutions of the following
%first order differential system over the time interval $[t^n, t^{n+1}]$:
%\begin{align*}
%	\frac{dX}{dt}=v_j, \quad X(t^{n+1};t^{n+1},x_i,v_j)=x_i,
%\end{align*}
%which immediately gives $X(t^{n};t^{n+1},x_i,v_j)=x_i-v_j \Delta t$. From this, 
%	\begin{align}\label{B-3}
%		\begin{split}
%		\frac{df}{ds}&=\frac{\tau}{\varepsilon}\left(\mathcal{G}(f)-f\right),\cr
%		\frac{dx}{ds}&=v_j,
%		\end{split}
%	\end{align}
%Here, one can easily have
%\[\displaystyle x(s)= x_i-v_j(t^{n+1}-s).\]
For treating the stiffness in $\tau/\varepsilon$, we begin by applying the implicit Euler method to the system \eqref{characteristic}:
\begin{align}\label{B-4}
	f_{i,j}^{n+1}-\tilde{f}_{i,j}^n = \frac{\tau_i^{n+1}\Delta t}{\varepsilon} \left(\mathcal{G}_{i,j}^{n+1}-f_{i,j}^{n+1}\right),
\end{align}
where $\tilde{f}_{i,j}^n$ is the approximation of $f$ on the characteristic foot $\tilde{x}_{i,j}=x_i-v_{j_1}\Delta t$ at time $t^n$. Multiplying both sides of \eqref{B-4} by collision invariants $\phi_{j}:=\left(1,v_j,\frac{1}{2}|v_j|^2\right)$ and taking a summation over $j$, we obtain
	\begin{align*}
		\sum_{j}\left(f_{i,j}^{n+1}-\tilde{f}_{i,j}^n\right) \phi_{j} (\Delta v)^3  = \sum_{j}\frac{\tau_i^{n+1}\Delta t}{\varepsilon} \left(\mathcal{G}(f_{i,j}^{n+1})-f_{i,j}^{n+1}\right) \phi_{j} (\Delta v)^3 .
	\end{align*}	
	Recalling the cancellation property \eqref{cancellation}, the right hand side becomes negligible if we secure sufficiently large velocity domain and take fine grids with a suitable quadrature rule. Assuming that the distribution function decays fast near the boundary, we obtain
    \begin{align}\label{rhouE}
\begin{split}
	    	\rho_i^{n+1} &= \sum_{j} \tilde{f}_{i,j}^n (\Delta v)^3 =: \tilde{\rho}_i^n, \cr
	U_i^{n+1} &= \frac{1}{\tilde{\rho}_i^n}\sum_{j} \tilde{f}_{i,j}^n v_j(\Delta v)^3 =: \tilde{U}_i^n,\cr
	E_i^{n+1} &= \sum_{j} \tilde{f}_{i,j}^{n} \frac{|v_j|^2}{2} (\Delta v)^3 =:\tilde{E}_i^n.
\end{split}
    \end{align}
From this, we can further approximate 
\begin{align}\label{TTT}
	\begin{split}
	T_i^{n+1}
%	&=\frac{2}{3}\frac{1}{{\rho}_i^{n+1}}\sum_{j} f_{i,j}^{n+1} \frac{|v_j-{U}_i^{n+1}|^2}{2}(\Delta v)^3 \cr
%	&=\frac{2}{3}\frac{1}{{\rho}_i^{n+1}}\left(E_i^{n+1}-\frac{\rho_i^{n+1}|U_i^{n+1}|^2}{2}\right) \cr
%	&=\frac{2}{3}\frac{1}{{\rho}_i^{n+1}}\left(E_i^{n+1}-\sum_{j} f_{i,j}^{n+1} v_j (\Delta v)^3 \cdot U_i^{n+1} +\sum_{j} f_{i,j}^{n+1}  (\Delta v)^3\frac{|U_i^{n+1}|^2}{2}\right) \cr
%	&=\frac{2}{3}\frac{1}{{\rho}_i^{n+1}}\left(\tilde{E}_i^{n}-\tilde{\rho}_i^n|\tilde{U}_i^{n}|^2 +\frac{\tilde{\rho}_i^n|\tilde{U}_i^{n}|^2}{2}\right) \cr
	&=  \frac{2}{3}\frac{1}{\tilde{\rho}_i^n}\sum_{j} \tilde{f}_{i,j}^{n} \frac{|v_j-\tilde{U}_i^n|^2}{2}(\Delta v)^3 =:\tilde{T}_i^n.
	\end{split}
\end{align}
In the case of implicit SL methods for BGK model in \cite{CBRY2,GRS}, the explicit approximation of $\rho_i^{n+1},u_i^{n+1},T_i^{n+1}$ is sufficient to determine the local Maxwellian, and hence no difficulty arises. However, for ES-BGK model, it remains to compute the temperature tensor $\mathcal{T}_i^{n+1}$. To avoid the use of any implicit solver, we introduce how to compute it explicitly. First, we multiply $\xi_{j}:=v_j \otimes v_j$ to (\ref{B-4}) to derive 	
\begin{align}\label{2.5}
	\begin{split}
		\Sigma_i^{n+1}&=\sum_{j} \left(\tilde{f}_{i,j}^{n} + \frac{\tau_i^{n+1} \Delta t}{\varepsilon} \mathcal{G}(f_{i,j}^{n+1})\right)\xi_{j}
		(\Delta v)^3 - \frac{\tau_i^{n+1} \Delta t}{\varepsilon}\Sigma_i^{n+1}\cr
		&= \Sigma_i^{*} + \frac{\tau_i^{n+1} \Delta t}{\varepsilon} \left(  \rho_i^{n+1} \left(\nu  \Theta_i^{n+1} + (1-\nu)T_i^{n+1}I\right)+ \rho_i^{n+1}u_i^{n+1}\otimes u_i^{n+1}\right) - \frac{\tau_i^{n+1} \Delta t}{\varepsilon}\Sigma_i^{n+1}\cr
		&= \Sigma_i^{*} + \frac{\tau_i^{n+1} \Delta t}{\varepsilon} \left( \nu\Sigma_i^{n+1}+   (1-\nu)\rho_i^{n+1}\left(T_i^{n+1}I + u_i^{n+1}\otimes u_i^{n+1}\right)\right) - \frac{\tau_i^{n+1} \Delta t}{\varepsilon}\Sigma_i^{n+1}\cr
		&= \Sigma_i^{*} + \frac{\tau_i^{n+1} \Delta t}{\varepsilon} \left(  (1-\nu)\rho_i^{n+1}\left(T_i^{n+1}I + u_i^{n+1}\otimes u_i^{n+1}\right)\right) - (1-\nu)\frac{\tau_i^{n+1} \Delta t}{\varepsilon}\Sigma_i^{n+1}
	\end{split}
\end{align}
where $\Sigma_i^{*}=\sum_{j} \left(\tilde{f}_{i,j}^{n} + \frac{\tau_i^{n+1} \Delta t}{\varepsilon} \mathcal{G}(f_{i,j}^{n+1})\right)\xi_{j}
(\Delta v)^3$.
Rearranging this, we get the explicit form of $\Sigma_i^{n+1}$:
\begin{align}\label{2.6}
	\Sigma_i^{n+1}= \frac{\varepsilon}{\varepsilon + (1-\nu)\tau_i^{n+1} \Delta t}\Sigma_i^{*}  + \frac{(1-\nu)\tau_i^{n+1} \Delta t}{\varepsilon + (1-\nu)\tau_i^{n+1} \Delta t}\rho_i^{n+1}\left(  T_i^{n+1}I + u_i^{n+1}\otimes u_i^{n+1}\right).
\end{align}
Now, we use this and the relation $\rho_i^{n+1}\Theta_i^{n+1}=\Sigma_i^{n+1} - \rho_i^{n+1} u_i^{n+1}\otimes u_i^{n+1}$ to approximate the temperature tensor $\mathcal{T}_i^{n+1}$ as follows:
\begin{align*}
	\mathcal{T}_i^{n+1}&= (1-\nu)T_i^{n+1}I + \nu \Theta_i^{n+1}\cr
	&= (1-\nu)T_i^{n+1}I + \frac{\nu}{\rho_i^{n+1}} \left[\frac{\varepsilon}{\varepsilon + (1-\nu)\tau_i^{n+1} \Delta t}\Sigma_i^{*}  + \frac{(1-\nu)\tau_i^{n+1} \Delta t}{\varepsilon + (1-\nu)\tau_i^{n+1} \Delta t}\rho_i^{n+1}\left(  T_i^{n+1}I + u_i^{n+1}\otimes u_i^{n+1}\right)
	\right]\cr
	&- \nu u_i^{n+1}\otimes u_i^{n+1}\cr
	&= (1-\nu)T_i^{n+1}I + \frac{\nu}{\rho_i^{n+1}} \left[\frac{\varepsilon}{\varepsilon + (1-\nu)\tau_i^{n+1} \Delta t}\Sigma_i^{*}  + \frac{(1-\nu)\tau_i^{n+1} \Delta t}{\varepsilon + (1-\nu)\tau_i^{n+1} \Delta t}\rho_i^{n+1} T_i^{n+1}I
\right]\cr	
&-\nu \left[ \frac{\varepsilon}{\varepsilon + (1-\nu)\tau_i^{n+1} \Delta t}\left(  u_i^{n+1}\otimes u_i^{n+1}\right)
\right].
\end{align*}
Then, we have a compact representation of $\mathcal{T}_i^{n+1}$:
\begin{align*}
	\mathcal{T}_i^{n+1}
	&= (1-\nu)T_i^{n+1}I + \nu \left[ \frac{(1-\nu)\tau_i^{n+1} \Delta t}{\varepsilon + (1-\nu)\tau_i^{n+1} \Delta t} T_i^{n+1} I
	\right]  + \nu \left[\frac{\varepsilon}{\varepsilon + (1-\nu)\tau_i^{n+1} \Delta t}\left(\frac{\Sigma_i^{*}}{\rho_i^{n+1}}- u_i^{n+1}\otimes u_i^{n+1} \right)
	\right]\cr
	&= (1-\nu)\left[ \frac{\varepsilon + \tau_i^{n+1} \Delta t }{\varepsilon + (1-\nu)\tau_i^{n+1} \Delta t} 
	\right]T_i^{n+1} I  +  \left[\frac{\varepsilon \nu}{\varepsilon + (1-\nu)\tau_i^{n+1} \Delta t}\left(\frac{\Sigma_i^{*}}{\rho_i^{n+1}}- u_i^{n+1}\otimes u_i^{n+1} \right)
	\right]\cr
%		&=\left[ \frac{ \varepsilon +  (1-\nu)\tau_i^{n+1} \Delta t -\varepsilon\nu }{\varepsilon + (1-\nu)\tau_i^{n+1} \Delta t} 
%	\right]T_i^{n+1} Id \cr
%	&+  \left[\frac{\varepsilon \nu}{\varepsilon + (1-\nu)\tau_i^{n+1} \Delta t}\left(\frac{\Sigma_i^{*}}{\rho_i^{n+1}}- u_i^{n+1}\otimes u_i^{n+1} \right)
%	\right]\cr
	&=(1-\nu_i^{n+1})T_i^{n+1} I + \nu_i^{n+1} \left(\frac{\Sigma_i^{*}}{\rho_i^{n+1}}- u_i^{n+1}\otimes u_i^{n+1} \right)\cr
	&=\tilde{\mathcal{T}}_i^n
\end{align*}
where 
\begin{align}\label{new nu}
	\nu_i^{n+1}=\frac{\varepsilon \nu}{\varepsilon + (1-\nu)\tau_i^{n+1} \Delta t}.
\end{align}
Note that $\tau_i^{n+1}$ depends on $\rho_i^{n+1}$ or $T_i^{n+1}$, thus it can also be computed explicitly. To sum up, the discrete Gaussian function $\mathcal{G}_{i,j}^{n+1}$ can be obtained by
\begin{align*}
	\mathcal{G}_{i,j}^{n+1}=\tilde{\mathcal{G}}_{i,j}^{n}:&= \frac{\tilde{\rho}_i^n}{\sqrt{\det\left(2 \pi \tilde{\mathcal{T}}_i^n \right)}} \exp \left(-\frac{(v_j-\tilde{U}_i^n)^{\top} (\tilde{\mathcal{T}}_i^n )^{-1}(v_j-\tilde{U}_i^n)}{2}\right).
\end{align*}
%Note that $\Lambda_\delta:= \sum_k e^{-I_k^{\frac{2}{\delta}}} $ is precomputable and all discrete macroscopic quantities are explicitly computed by $\tilde{f}_{i,j}^n$ and formula \eqref{rhouE}, \eqref{TTT}, \eqref{B-5}, \eqref{Ttheta}.
Finally, we our scheme reads
\begin{align}\label{numsol}
f_{i,j}^{n+1} =\frac{\varepsilon\tilde{f}_{i,j}^n +   \tau_i^{n+1} \Delta t \tilde{\mathcal{G}}_{i,j}^{n}}{\varepsilon + \tau_i^{n+1} \Delta t}.
\end{align}
Note that it is sufficient to employ the linear interpolation to construct first order scheme.
\begin{remark}
	The way of approximating the temperature tensor $\mathcal{T}_i^{n+1}$ in an explicit way is similar to the implicit explicit method adopted in the Eulerian framework \cite{FJ}. In particular, the form of $\Sigma_i^{n+1}$ in our method is consistent with the Eulerian method. We also note that our approximation of temperature tensor $\mathcal{T}_i^{n+1}$ is equivalent to the method in \cite{RY}.
\end{remark}

%%%%%%%%%%%%%%%%%%%%%%%%%%%%%%%%%%%%%%%%%%%%%%%%%%%%%%%%%%%%%%%%%%%%%%%%%%%%%%%%%%%%%%%%%%%%%%%%%%%%%%%%%%%%%%%%%%%%%%%%%%%%%%%%%%%%%%%%%%%%%%%%%%%%%%%%%%%%%%%%%%%%%%%%%%%%%%%%%%%%%%%%%%%%%%%%%%%%%%%%%%%%

	\subsection{Derivation of the high order scheme} 
	In this section, we introduce two class of high order SL methods for ES-BGK model. One is $L$-stable diagonally implicit Runge-Kutta method (DIRK) and the other one is backward differene formula (BDF). Before proceeding, for the brevity of description, we define $$Q(x,v,t):=\tau(x,v,t)\left(\mathcal{G}(x,v,t)-f(x,v,t)\right).$$ 

	\subsubsection{DIRK methods}
We begin by applying $s$-stage DIRK methods to \eqref{characteristic}:
\begin{align}\label{rk 1st}
	f_{i,j}^{(k)} &= \tilde{f}_{i,j}^{(k,0)}+\sum_{\ell=1}^{k-1}a_{k\ell}\frac{\Delta t}{\varepsilon} Q_{i,j}^{(k,\ell)} +a_{kk} \frac{\tau_i^{(k)}\Delta t}{\varepsilon} \left(\mathcal{G}_{i,j}^{(k)}-f_{i,j}^{(k)}\right),\quad k=1,...,s,
\end{align}	
where 
$\tilde{f}_{i,j}^{(k,0)}\approx  f(x_i-v_{j_1}c_k\Delta t,v_j,t^n)$ and
$Q_{i,j}^{(k,\ell)}\approx Q(x_i-v_{j_1}(c_k-c_\ell)\Delta t,v_j,t^n+(c_k-c_\ell)\Delta t)$. Letting
\begin{align}\label{RK tilde}
	\tilde{f}_{i,j}^{(k)}:=\tilde{f}_{i,j}^{(k,0)}+\sum_{\ell=1}^{k-1}a_{k\ell}\frac{\Delta t}{\varepsilon} Q_{i,j}^{(k,\ell)},
\end{align}
we can rewrite \eqref{rk 1st} as
\begin{align}
	f_{i,j}^{(k)} = \tilde{f}_{i,j}^{(k)}+a_{kk} \frac{\tau_i^{(k)}\Delta t}{\varepsilon} \left(\mathcal{G}_{i,j}^{(k)}-f_{i,j}^{(k)}\right).
\end{align}	
Note that this form is similar to the first order method in \eqref{B-4}. Using the same argument as in the first order method, we can explicitly compute the conservative moments 
\begin{align}
	\begin{split}
		\rho_i^{(k)} &= \sum_{j} \tilde{f}_{i,j}^{(k)} (\Delta v)^3 =: \tilde{\rho}_i^{(k)}, \cr
		U_i^{(k)} &= \frac{1}{\tilde{\rho}_i^n}\sum_{j} \tilde{f}_{i,j}^{(k)} v_j(\Delta v)^3 =: \tilde{U}_i^{(k)},\cr
		E_i^{(k)} &= \sum_{j} \tilde{f}_{i,j}^{(k)} \frac{|v_j|^2}{2} (\Delta v)^3 =:\tilde{E}_i^{(k)},
	\end{split}
\end{align}
and the temperature
\begin{align}
	\begin{split}
		T_i^{(k)}&=\frac{2}{3}\frac{1}{{\rho}_i^{(k)}}\sum_{j} f_{i,j}^{(k)} \frac{|v_j-{U}_i^{(k)}|^2}{2}(\Delta v)^3 \cr
		&=  \frac{2}{3}\frac{1}{\tilde{\rho}_i^{(k)}}\sum_{j} \tilde{f}_{i,j}^{(k)} \frac{|v_j-\tilde{U}_i^n|^2}{2}(\Delta v)^3\cr
		&=:\tilde{T}_i^{(k)}.
	\end{split}
\end{align}
For the stress tensor $\Theta_i^{n+1}$, we introduce	
\begin{align}
	\tilde{\Sigma}_i^{(k)}&=\sum_{j} \tilde{f}_{i,j}^{(k)}\xi_{j}
	(\Delta v)^3
\end{align}
which gives
\begin{align}
	\Sigma_i^{(k)}= \frac{\varepsilon}{\varepsilon + (1-\nu)\tau_i^{(k)} a_{kk}\Delta t}\tilde{\Sigma}_i^{(k)}  + \frac{(1-\nu)\tau_i^{(k)} a_{kk}\Delta t}{\varepsilon + (1-\nu)\tau_i^{(k)} a_{kk}\Delta t}\rho_i^{{(k)}}\left(  T_i^{(k)}I + u_i^{(k)}\otimes u_i^{(k)}\right).
\end{align}
This, combined with the relation $\rho_i^{(k)}\Theta_i^{(k)}=\Sigma_i^{(k)} - \rho_i^{(k)} u_i^{(k)}\otimes u_i^{(k)}$ enable us to approximate $\mathcal{T}_i^{(k)}$ as 
\begin{align*}
	\mathcal{T}_i^{(k)}
	&=\left[ \frac{ \varepsilon +  (1-\nu)\tau_i^{(k)} a_{kk}\Delta t -\varepsilon\nu }{\varepsilon + (1-\nu)\tau_i^{(k)} a_{kk}\Delta t} 
	\right]T_i^{(k)} I +  \left[\frac{\varepsilon \nu}{\varepsilon + (1-\nu)\tau_i^{(k)} a_{kk}\Delta t}\left(\frac{\tilde{\Sigma}_i^{(k)}}{\rho_i^{(k)}}- u_i^{(k)}\otimes u_i^{(k)} \right)
	\right]\cr
	&=(1-\nu_i^{(k)})T_i^{(k)} I + \nu_i^{(k)} \left(\frac{\tilde{\Sigma}_i^{(k)}}{\rho_i^{(k)}}- u_i^{(k)}\otimes u_i^{(k)} \right)\cr
	&=\tilde{\mathcal{T}}_i^{(k)},
\end{align*}
where
\begin{align}\label{new nu high}
	\nu_i^{(k)}=\frac{\varepsilon \nu}{\varepsilon + (1-\nu)\tau_i^{(k)} a_{kk}\Delta t}.
\end{align}
To sum up, we can explicitly compute $	\mathcal{G}_i^{(k)}$ with
\begin{align*}
	\mathcal{G}_i^{(k)}=\tilde{\mathcal{G}}_{i,j}^{(k)}:&= \frac{\tilde{\rho}_i^{(k)}}{\sqrt{\det\left(2 \pi \tilde{\mathcal{T}}_i^{(k)} \right)}} \exp \left(-\frac{(v_j-\tilde{U}_i^{(k)})^{\top} (\tilde{\mathcal{T}}_i^{(k)} )^{-1}(v_j-\tilde{U}_i^{(k)})}{2}\right).
\end{align*}
%Note that $\Lambda_\delta:= \sum_k e^{-I_k^{\frac{2}{\delta}}} $ is precomputable and all discrete macroscopic quantities are explicitly computed by $\tilde{f}_{i,j}^n$ and formula \eqref{rhouE}, \eqref{TTT}, \eqref{B-5}, \eqref{Ttheta}.
Then, the $k$-stage value of $f_{i,j}^{(k)}$ is given by 
\begin{align}\label{numSol RK}
	f_{i,j}^{(k)} =\frac{\varepsilon\tilde{f}_{i,j}^{(k)} +   \tau_i^{(k)} a_{kk} \Delta t\mathcal{G}_i^{(k)}}{\varepsilon + \tau_i^{(k)} a_{kk}\Delta t}.
\end{align}
After updating solution from $k=1$ to $s$, we finally set
$f_{i,j}^{n+1}=f_{i,j}^{(s)}.$
This follows from the stiffly accurate (SA) property of the DIRK methods \cite{HG}.

%%%%%%%%%%%%%%%%%%%%%%%%%%%%%%%%%%%%%%%%%%%%%%%%%%%%%%%%%%%%%%%%%%%%%%%%%%%%%%%%%%%%%%%%%%%%%%%%%%%%%%%%%%%%%%%%%%%%%%%%%%%%%%%%%%%%%%%%%%%%%%%%%%%%%%%%%%%%%%%%%%%%%%%%%%%%%%%%%%%%%%%%%%%%%%%%%%%%%%%%%%%%

\subsubsection{BDF methods}
Applying $s$-step BDF methods to \eqref{characteristic}, we get
\begin{align}\label{bdf form}
	f_{i,j}^{n+1} &= \sum_{k=1}^s \alpha_k f_{i,j}^{n,k} +\beta_{s} \frac{\tau_i^{n+1}\Delta t}{\varepsilon} \left(\mathcal{G}_{i,j}^{n+1}-f_{i,j}^{n+1}\right),
\end{align}	
where $f_{i,j}^{n,k}\approx f(x_i-kv_{j_1}\Delta t,v_j,t^{n+1-k})$. This can be rewritten as
\begin{align}
	f_{i,j}^{n+1} &= \tilde{f}_{i,j}^{n} +\beta_{s} \frac{\tau_i^{n+1}\Delta t}{\varepsilon} \left(\mathcal{G}_{i,j}^{n+1}-f_{i,j}^{n+1}\right),
\end{align}
where $\tilde{f}_{i,j}^{n}=\sum_{k=1}^s \alpha_k f_{i,j}^{n,k}$. Hereafter, as in the DIRK based method, the same argument can be applied for the derivation of BDF based methods. Discrete macroscopic variables can be approximated as follows:
\begin{align}
	\begin{split}
		\rho_i^{n+1} &= \sum_{j} \tilde{f}_{i,j}^{n} (\Delta v)^3 =: \tilde{\rho}_i^{n}, \cr
		U_i^{n+1} &= \frac{1}{\rho_i^{n+1}}\sum_{j} \tilde{f}_{i,j}^{n} v_j(\Delta v)^3 =: \tilde{U}_i^{n},\cr
		E_i^{n+1} &= \sum_{j} \tilde{f}_{i,j}^{n} \frac{|v_j|^2}{2} (\Delta v)^3 =:\tilde{E}_i^{n},\cr
		T_i^{n+1}&=\frac{2}{3}\frac{1}{{\rho}_i^{n+1}}\sum_{j} \tilde{f}_{i,j}^{n} \frac{|v_j-{U}_i^{n+1}|^2}{2}(\Delta v)^3 \cr
		&=  \frac{2}{3}\frac{1}{\tilde{\rho}_i^{n}}\sum_{j} \tilde{f}_{i,j}^{n} \frac{|v_j-\tilde{U}_i^n|^2}{2}(\Delta v)^3 ,\cr
		&=:\tilde{T}_i^{n}.
	\end{split}
\end{align}
Denoting
\begin{align}
	\tilde{\Sigma}_i^{n}&=\sum_{j} \tilde{f}_{i,j}^{n}\xi_{j}
	(\Delta v)^3,
\end{align}
we compute
\begin{align}
	\Sigma_i^{n+1}= \frac{\varepsilon}{\varepsilon + (1-\nu)\tau_i^{n+1} \beta_s\Delta t}\tilde{\Sigma}_i^{n}  + \frac{(1-\nu)\tau_i^{n+1} \beta_s\Delta t}{\varepsilon + (1-\nu)\tau_i^{n+1} \beta_s\Delta t}\rho_i^{n+1}\left(  T_i^{n+1}I + u_i^{n+1}\otimes u_i^{n+1}\right).
\end{align}
Then, we combine this with $\rho_i^{n+1}\Theta_i^{n+1}=\Sigma_i^{n+1} - \rho_i^{n+1} u_i^{n+1}\otimes u_i^{n+1}$ to obtain
\begin{align*}
	\mathcal{T}_i^{n+1}
	&=\left[ \frac{ \varepsilon +  (1-\nu)\tau_i^{n+1} \beta_s\Delta t -\varepsilon\nu }{\varepsilon + (1-\nu)\tau_i^{n+1} \beta_s\Delta t} 
	\right]T_i^{n+1} I +  \left[\frac{\varepsilon \nu}{\varepsilon + (1-\nu)\tau_i^{n+1} \beta_s\Delta t}\left(\frac{\tilde{\Sigma}_i^{n}}{\rho_i^{n+1}}- u_i^{n+1}\otimes u_i^{n+1} \right)
	\right]\cr
	&=(1-\nu_i^{n+1})T_i^{n+1} I + \nu_i^{n+1} \left(\frac{\tilde{\Sigma}_i^{n}}{\rho_i^{n+1}}- u_i^{n+1}\otimes u_i^{n+1} \right)\cr
	&=\tilde{\mathcal{T}}_i^{n},
\end{align*}
where 
\[
\nu_i^{n+1}=\frac{\varepsilon \nu}{\varepsilon + (1-\nu)\tau_i^{n+1} \beta_s\Delta t}.
\]
To sum up, we derive
\begin{align*}
	\mathcal{G}_{i,j}^{n+1}=\tilde{\mathcal{G}}_{i,j}^{n}:&= \frac{\tilde{\rho}_i^{n}}{\sqrt{\det\left(2 \pi \tilde{\mathcal{T}}_i^{n} \right)}} \exp \left(-\frac{(v_j-\tilde{U}_i^{n})^{\top} (\tilde{\mathcal{T}}_i^{n} )^{-1}(v_j-\tilde{U}_i^{n})}{2}\right).
\end{align*}
The last step is to update solution with
\begin{align}\label{numSol bdf}
	f_{i,j}^{n+1} =\frac{\varepsilon\tilde{f}_{i,j}^{n} +   \tau_i^{n+1} \beta_s \Delta t\tilde{\mathcal{G}}_{i,j}^{n}}{\varepsilon + \tau_i^{n+1} \beta_s\Delta t}.
\end{align}

\subsection{Weighted $L^2$-minimization for moment correction}\label{sec weight}
In this section, we review the weighted $L^2$-minimization technique \cite{BCR1} which will be applied to the SL method for ES-BGK model. To describe the detail, let us consider a $\mathcal{G} \in \mathbb{R}^{{N_v}^{d_v}}$ and its macroscopic variables $\mathcal{U}:=(\rho,\rho U, E)^{\top} \in \mathbb{R}^{d_v+2}$. When the number of $N_v$ is not sufficient, it is difficult to reproduce $\mathcal{U}$ from $\mathcal{G}$. To treat this problem, we look for a vector $g\in \mathbb{R}^{{N_v}^{d_v}}$ that is close to $\mathcal{G}$ and satisfies two constraints, (1) the vector $g$ should reproduce the reference moments $\mathcal{U}$ (2) the vector $g$ should be very small near boundaries of the velocity domain. To achieve these goals, we solve a constrained weighted $L^2$-minimization problem:
%By solving the problem, we slightly modify the initial guess of the distribution function, i.e.,
%If we reconstruct the distribution function $f$ to match with the reference mass only, the other quantities can not be reproduced with $f$. 
%That is,
%given an initial guess $f \in \mathbb{R}^{{N_v}^{d_v}}$ we look for a solution $g\in \mathbb{R}^{{N_v}^{d_v}}$ of the following $L^2$-minimization problem:
\begin{align}\label{min prb}
	\min_g \bigg\|\mathcal{G}\circ \frac{1}{\omega} - g\bigg\|_2^2 \quad\text{ s.t }\quad Cg=\mathcal{U}
\end{align}
where $\circ$ denotes the component-wise multiplication and
$$\mathcal{G} \equiv (\mathcal{G}_1,\mathcal{G}_2,\cdots,\mathcal{G}_{{N_v}^{d_v}})^\top\in \mathbb{R}^{{N_v}^{d_v}}, \quad g =(g_1,g_2,\cdots,g_{{N_v}^{d_v}})^\top\in \mathbb{R}^{{N_v}^{d_v}},$$
$$\omega =(\omega_1,\omega_2,\cdots,\omega_{{N_v}^{d_v}})^\top\in \mathbb{R}^{{N_v}^{d_v}},$$
$$\displaystyle C:=\begin{pmatrix}
	\omega_j(\Delta v)^{d_v}\\  \omega_j v_j (\Delta v)^{d_v} \\ \frac{\omega_j |v_j|^2}{2} (\Delta v)^{d_v}
\end{pmatrix}\in \mathbb{R}^{(d_v+2) \times {N_v}^{d_v}}, \quad \mathcal{U}= (\rho,\rho U, E)^{\top}\in \mathbb{R}^{(d_v+2) \times 1}.$$
As in \cite{BCR1}, the solution can be obtained by the method of Lagrange multiplier. We first denote as $\mathcal{L}(g,\lambda)$ the Lagrangian: 
$$\mathcal{L}(g,\lambda)= \left\|\mathcal{G}\circ \frac{1}{\omega}- g\right\|_2^2 + \lambda^{\top}\left(C g-\mathcal{U}\right),$$
where $\lambda$ are the corresponding Lagrangian variables. Finding the gradients of $\mathcal{L}$ with respect to $g$ and $\lambda$ to be zero at the same time:
\begin{align*}
	\begin{array}{rlrl}
		\nabla_g \mathcal{L} =0 &\Leftrightarrow& g&= \mathcal{G}\circ \frac{1}{\omega} + \frac{1}{2}C^{\top} \lambda\cr
		\nabla_\lambda \mathcal{L} = 0 &\Leftrightarrow& C g&=\mathcal{U}
	\end{array},
\end{align*}
we can find the stationary points $\lambda$:
\begin{align*}
	%		Cg= C\mathcal{M}\circ \frac{1}{h}+ \frac{1}{2}CC^{\top} \lambda &\Leftrightarrow \mathcal{U}= C\mathcal{M}\circ \frac{1}{h}+ \frac{1}{2}CC^{\top} \lambda \cr
	%		&\Leftrightarrow  
	\lambda= 2(CC^{\top})^{-1}\left(\mathcal{U}-C\left(\mathcal{G}\circ \frac{1}{\omega}\right)\right).
\end{align*}
Since the matrix $CC^{\top}$ is symmetric and positive definite, it is invertible. Consequently,
\begin{align*}%\label{first dm}
	g\circ \omega= \mathcal{G}+C^{\top}(CC^{\top})^{-1}\left(\mathcal{U}-C\left(\mathcal{G}\circ \frac{1}{\omega}\right)\right)\circ \omega.
\end{align*}
\begin{remark}\label{rem delta}
	In practice, considering machine precision, the norm of $Cg-U$ should be considered a very small value. In the rest of this paper, we will assume that the solution to \eqref{min prb} satisfies
	$$\|Cg-U\|_\infty<\delta,$$
	where $\delta$ denotes the machine precision.
\end{remark}
%
%  
%As in \cite{BCR1}, we may take into account the weight for the optimization. 

%%%%%%%%%%%%%%%%%%%%%%%%%%%%%%%%%%%%%%%%%%%%%%%%%%%%%%%%%%%%%%%%%%%%%%%%%%%%%%%%%%%%%%%%%%%%%%%%%%%%%%%%%%%%%%%%%%%%%%%%%%%%%%%%%%%%%%%%%%%%%%%%%%%%%%%%%%%%%%%%%%%%%%%%%%%%%%%%%%%%%%%%%%%%%%%%%%%%%%%%%%%%%%%%%%%%%%%%%%%%%%%%%%%%%%%%%%%%%%%%%%%%%%%%%%%%%%%%%%%%%%%%%%%%%%%%%

\section{properties of the numerical method}\label{sec property}

In this section, we study the several properties of the proposed SL methods.

\begin{proposition}
%	The proposed first order methods satisfy
	%	\begin{enumerate}
		%		\item 
		Given $\varepsilon>0$ and $t^n > 0$, the first order methods satisfies
		$$0\leq f_{i,j}^{n+1}\leq \max\{\|f^n\|_\infty ,\|\mathcal{G}^{n+1}\|_\infty\}.$$
		%		\item For all $\Delta t> 0$ and $f_0$, the distribution function $f^n$ converges to $\mathcal{M}^n$, that is,
		%		$\lim\limits_{\varepsilon \to 0} f^n = \mathcal{M}^n$
		%		and the scheme gives a first order approximation in time of the compressible Euler system.
		%	\end{enumerate}
	
\end{proposition}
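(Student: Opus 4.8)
The plan is to prove the two-sided bound directly from the explicit update formula \eqref{numsol}, namely
\[
f_{i,j}^{n+1} = \frac{\varepsilon\,\tilde{f}_{i,j}^n + \tau_i^{n+1}\Delta t\,\tilde{\mathcal{G}}_{i,j}^{n}}{\varepsilon + \tau_i^{n+1}\Delta t},
\]
which exhibits $f_{i,j}^{n+1}$ as a convex combination of $\tilde f_{i,j}^n$ and $\tilde{\mathcal G}_{i,j}^n$ with weights $\varepsilon/(\varepsilon+\tau_i^{n+1}\Delta t)$ and $\tau_i^{n+1}\Delta t/(\varepsilon+\tau_i^{n+1}\Delta t)$. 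Since $\varepsilon>0$, $\Delta t>0$, and the relaxation frequency $\tau_i^{n+1}>0$ (as a positive function of $\rho_i^{n+1},T_i^{n+1}$), both weights lie in $[0,1]$ and sum to one. Hence, once I know the two ingredients are nonnegative and bounded by the claimed maximum, the convexity gives the result immediately: $\min\{\tilde f_{i,j}^n,\tilde{\mathcal G}_{i,j}^n\}\le f_{i,j}^{n+1}\le\max\{\tilde f_{i,j}^n,\tilde{\mathcal G}_{i,j}^n\}$.

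First I would control $\tilde f_{i,j}^n$. Recall $\tilde f_{i,j}^n$ is the value of the numerical solution at time $t^n$ at the characteristic foot $\tilde x_{i,j}=x_i-v_{j_1}\Delta t$, reconstructed from the grid values $\{f_{i,j}^n\}$. For the first order scheme this reconstruction is linear interpolation, which is a convex combination of two neighboring nodal values $f_{i',j}^n$; therefore $0\le \tilde f_{i,j}^n\le \|f^n\|_\infty$, assuming inductively $f^n\ge 0$ (the base case $f^0\ge 0$ being the physically admissible initial datum). So $\tilde f_{i,j}^n$ contributes nonnegativity and the $\|f^n\|_\infty$ part of the upper bound.

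Next I would handle $\tilde{\mathcal G}_{i,j}^n$. By construction it equals $\mathcal{G}_{i,j}^{n+1}$, a discrete ellipsoidal Gaussian, so it is manifestly nonnegative (exponential of a real number times a positive prefactor), giving $0\le\tilde{\mathcal G}_{i,j}^n\le\|\mathcal G^{n+1}\|_\infty$. The one point that needs a remark is well-posedness of the Gaussian: it requires the temperature tensor $\tilde{\mathcal T}_i^n$ to be symmetric positive definite. This follows because $\tilde{\mathcal T}_i^n=\nu_i^{n+1}\Theta_i^{n+1}+(1-\nu_i^{n+1})T_i^{n+1}I$ with the \emph{effective} parameter $\nu_i^{n+1}=\varepsilon\nu/(\varepsilon+(1-\nu)\tau_i^{n+1}\Delta t)$, and one checks $-\tfrac12\le\nu_i^{n+1}<1$ given $-\tfrac12\le\nu<1$; combined with positive definiteness of $\Theta_i^{n+1}$ (a sum of rank-one matrices $(v_j-U)\otimes(v_j-U)$ weighted by the nonnegative $\tilde f_{i,j}^n$, discretely approximating a genuine covariance) and $T_i^{n+1}>0$, the convex-type combination is SPD by Holway's classical argument. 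I would state this as the one nontrivial verification.

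Combining the three bounds: $f_{i,j}^{n+1}$ is a convex combination of two quantities each in $[0,\max\{\|f^n\|_\infty,\|\mathcal G^{n+1}\|_\infty\}]$, hence lies in the same interval, which is exactly the claim. The main obstacle — really the only subtle point — is justifying that the discrete temperature tensor is positive definite so that $\tilde{\mathcal G}_{i,j}^n$ is a bona fide nonnegative Gaussian; everything else is the observation that the scheme is written as a convex combination and that linear interpolation preserves the range of nodal values.
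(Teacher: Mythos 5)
Your proof follows essentially the same route as the paper's: the update \eqref{numsol} exhibits $f_{i,j}^{n+1}$ as a convex combination of $\tilde{f}_{i,j}^n$ and $\mathcal{G}_{i,j}^{n+1}$, and $\tilde{f}_{i,j}^n$ is itself a convex combination of two neighboring grid values via linear interpolation, which gives the bound. The extra care you take --- the inductive nonnegativity of $f^n$ and the positive definiteness of the discrete temperature tensor so that the Gaussian is well defined --- is left implicit in the paper's two-line argument but is consistent with it.
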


\begin{proof}
	%		\noindent \textbf{Proof of (1)} 
	In the first order scheme \eqref{numsol}, the numerical solution $f_{i,j}^{n+1}$ is given by the convex combination of $\tilde{f}_{i,j}^n$ and $\tilde{\mathcal{G}}_{i,j}^{n}=\mathcal{G}_{i,j}^{n+1}$. Furthermore, the value of $\tilde{f}_{i,j}^n$ is obtained by a linear interpolation of $f_{i^*,j}^n$ and $f_{i^*+1,j}^n$ where $i^*$ is the index such that $x_{i^*-1}< x_i-v_j\Delta t\leq x_{i^*}$. This implies the desired result.
	%		\noindent \textbf{Proof of (2)}\\
	%		Given a distribution function $f^n$, the asymptotic limit $\varepsilon \to 0$ gives
	%		\begin{align*}
		%			\lim\limits_{\varepsilon \to 0} \Sigma_i^{n+1}&=\lim\limits_{\varepsilon \to 0}\left[ \frac{\varepsilon}{\varepsilon + (1-\nu)\tau_i^{n+1} \Delta t}\Sigma_i^{*}  + \frac{(1-\nu)\tau_i^{n+1} \Delta t}{\varepsilon + (1-\nu)\tau_i^{n+1} \Delta t}\rho_i^{n+1}\left(  T_i^{n+1}Id + u_i^{n+1}\otimes u_i^{n+1}\right)\right]\cr
		%			&=\rho_i^{n+1}\left(  T_i^{n+1}Id + u_i^{n+1}\otimes u_i^{n+1}\right)
		%		\end{align*}
	%		This further implies that  
	%		$$ \lim\limits_{\varepsilon \to 0}\rho_i^{n+1}\Theta_i^{n+1} = \lim\limits_{\varepsilon \to 0}\left[\Sigma_i^{n+1} - \rho_i^{n+1} u_i^{n+1}\otimes u_i^{n+1}\right] = \rho_i^{n+1} T_i^{n+1}Id$$ 
	%		and hence
	%		\begin{align*}
		%			\lim\limits_{\varepsilon \to 0}\mathcal{T}_i^{n+1}
		%			&=T_i^{n+1} Id
		%		\end{align*}
	%		To sum up, in the limit $\varepsilon \to 0$, the distribution function converges to isotropic Maxwellian 
	%		\[
	%		\mathcal{M}_{i,j}^{n+1}=
	%		\frac{\rho_i^{n+1}}{(2 \pi T_i^{n+1})^{\frac{d_v}{2}}} \exp \left(-\frac{|v_j-U_i^{n+1}|^2}{2T_i^{n+1}}\right).
	%		\]

\end{proof}

In the following proposition, we show that our first order SL scheme is consistent to compressible Navier-Stokes equations for small Knudsen number.
\begin{proposition}
	Let $f^n(x,v)$ be a solution with time discretization. Assume there exist positive constants $C_1, C_2$ independent of $n$ such that for each $n\geq 0$
	\begin{align}\label{con1}
		\sum_{0\leq |\alpha| \leq 2} \sup_{x,v} {|\partial_x^\alpha f^n(x,v)|(1+ |v|^8)} < C_1,
	\end{align}
	and
	\begin{align}\label{con2}
	\sum_{0\leq |\alpha| \leq 2} \sup_{x,v} {|\partial_x^\alpha \mathcal{M}^n(x,v)|(1+ |v|^8)} < C_2,
\end{align}
where $\mathcal{M}$ denotes the local Maxwellian defined in \eqref{M}. Then, the first order (in time) SL scheme \eqref{numsol} for ES-BGK model asymptotically becomes a consistent approximation of the compressible Navier-Stokes system \eqref{NS} for small Knudsen number $\varepsilon$.
\end{proposition}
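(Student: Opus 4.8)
The strategy is a discrete Chapman–Enskog expansion of the scheme carried out at the level of the one-step update, combined with the fact that the semi-Lagrangian transport along characteristics is exact, so that the only approximation in space comes from the interpolation (which for the first-order scheme is consistent). I would start from the update formula \eqref{numsol}, rewritten as
\begin{align}\label{plan-rewrite}
	f_{i,j}^{n+1} = \tilde{\mathcal{G}}_{i,j}^{n} - \frac{\varepsilon}{\varepsilon + \tau_i^{n+1}\Delta t}\left(\tilde{\mathcal{G}}_{i,j}^{n} - \tilde{f}_{i,j}^{n}\right),
\end{align}
and interpret $\tilde{f}_{i,j}^n$ and $\tilde{\mathcal{G}}_{i,j}^n$ as evaluations at the characteristic foot $x_i - v_{j_1}\Delta t$. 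Expanding the foot-point values in Taylor series in $\Delta t$ (this is where \eqref{con1}–\eqref{con2} are used: the bounds on two $x$-derivatives weighted by $(1+|v|^8)$ let me control the remainder uniformly after integrating against $1, v, \tfrac12|v|^2$ and $v\otimes v$), I get $\tilde{f}_{i,j}^n = f_{i,j}^n - v_{j_1}\Delta t\,\partial_x f_{i,j}^n + O(\Delta t^2)$, and similarly for $\tilde{\mathcal{G}}$. Here the key structural input from Section~2 is that $\tilde{\mathcal{G}}_{i,j}^n = \mathcal{G}_{i,j}^{n+1}$ is built from moments that are themselves explicit transported moments, so $\tilde{\mathcal{G}}$ differs from the genuine local Gaussian $\mathcal{G}(f^n)$ only by $O(\Delta t)$ terms whose moments I can track; moreover $\nu_i^{n+1} = \nu + O(\Delta t)$ by \eqref{new nu}, so to leading order $\tilde{\mathcal{G}}$ is the ES-Gaussian associated with $f^n$.

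Next I would take discrete moments of \eqref{plan-rewrite} against $1, v, \tfrac12|v|^2$. Using the cancellation property \eqref{cancellation} (valid up to quadrature error, which is absorbed by the decay assumptions), the collision term drops out and I obtain the discrete conservation laws
\begin{align}\label{plan-cons}
	\mathcal{U}_i^{n+1} = \sum_j \tilde{f}_{i,j}^n \phi_j (\Delta v)^3 = \mathcal{U}_i^n - \Delta t\,\partial_x\!\sum_j v_{j_1} f_{i,j}^n \phi_j (\Delta v)^3 + O(\Delta t^2),
\end{align}
i.e. the mass/momentum/energy updates are a consistent (forward-Euler, exact-transport) discretization of the conservation form $\partial_t \mathcal{U} + \partial_x F(f) = 0$ with flux $F(f) = \int v_1 f\,\phi\,dv$. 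It then remains to identify $F(f^{n+1})$ — equivalently the stress and heat flux — to first order in $\varepsilon$. For this I solve \eqref{plan-rewrite} for $f_{i,j}^{n+1}$ in the small-$\varepsilon$ regime: writing $f^{n+1} = \tilde{\mathcal{G}}^n + \varepsilon r$, substituting back, and matching orders, the $O(\varepsilon)$ correction $r$ is determined by $\tilde{\mathcal{G}}^n - \tilde{f}^n$ divided by $\tau\Delta t$, which by the Taylor expansion equals $\tfrac{1}{\tau}( \partial_t + v\cdot\partial_x)\mathcal{G}(f^n) + O(\Delta t) + O(\varepsilon)$. Taking the $v\otimes v$ and $\tfrac12 v|v|^2$ moments of this correction and using the standard ES-BGK moment identities (the same ones that produce $\mu = \tfrac{1}{1-\nu}\tfrac{p}{\tau}$ and $\kappa = \tfrac{d_v+2}{2}\tfrac{p}{\tau}$ in \eqref{NS}) yields the Navier–Stokes stress $\varepsilon\,\mu\,\sigma(u)$ and heat flux $\varepsilon\,\kappa\,\partial_x T$, with exactly the coefficients in \eqref{NS}.

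Finally I would collect the error terms: the interpolation error of the first-order scheme is $O(\Delta x)$ and appears only through $\tilde f$, hence is $O(\Delta x)$ in the fluxes; the time-discretization error is $O(\Delta t)$; the quadrature/truncation error in velocity is controlled by the decay hypotheses \eqref{con1}–\eqref{con2} and Remark~\ref{rem delta}; and the $O(\varepsilon^2)$ terms in the Chapman–Enskog matching are the genuine higher-order (Burnett) corrections that one discards at the Navier–Stokes level. Assembling \eqref{plan-cons} with the identified $O(\varepsilon)$ fluxes gives that the scheme is a consistent discretization of \eqref{NS} up to $O(\Delta t) + O(\Delta x) + O(\varepsilon^2)$. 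The main obstacle, and the place requiring genuine care rather than bookkeeping, is the uniform control of the velocity moments of the Taylor remainders and of the difference $\tilde{\mathcal{G}}^n - \mathcal{G}(f^n)$: one must show that differentiating in $x$ and multiplying by powers of $v$ up to $v\otimes v|v|^2$ keeps everything integrable, which is precisely what the weighted bounds \eqref{con1}–\eqref{con2} are designed to furnish — so the proof is really an exercise in verifying that those hypotheses propagate through every step of the expansion.
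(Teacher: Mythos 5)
Your proposal follows essentially the same route as the paper: a discrete Chapman--Enskog expansion performed on the implicit-Euler characteristic update, taking moments against the collision invariants so the relaxation term cancels, Taylor-expanding the foot-point values in $\Delta t$ with the weighted bounds \eqref{con1}--\eqref{con2} controlling the velocity integrals of the remainders, and then identifying the $O(\varepsilon)$ stress and heat flux from the first-order correction to recover the Navier--Stokes transport coefficients. The only place you compress more than the paper does is the derivation of the $\tfrac{1}{1-\nu}$ factor in the viscosity, which the paper obtains by solving the self-consistent relation $\rho^n\Theta_1^n=\nu\rho^n\Theta_1^n-\tfrac{1}{\tau^n}\rho^{n-1}T^{n-1}\sigma(u^{n-1})+\cdots$ coming from the expansion $\mathcal{G}^n=\mathcal{M}^n+\varepsilon g_1^n$ (following Proposition 1 of the cited Filbet--Jin paper); your appeal to ``standard ES-BGK moment identities'' points at exactly this computation, so no substantive idea is missing.
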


%
%\begin{proposition}\label{propo consistency}
%	Let $f^n(x,v)$ be a solution with time discretization. Assume there exist positive constants $C_1, C_2$ independent of $n$ such that
%	\begin{align}\label{con1}
%		\sum_{0\leq |\alpha| \leq 2} \sup_{x,v} {|\partial_x^\alpha f^n(x,v)|(1+ |v|^8)} < C_1,
%	\end{align}
%	and
%	\begin{align}\label{con2}
%		\int_{\mathbb{R}} {f}^{n}(x,v)dv>C_2>0.
%	\end{align}
%	Then, the first order SL scheme \eqref{numSol} is consistent with the first order approximations in time of the compressible Navier-Stokes system.
%\end{proposition}

Consider the characteristic formation of \eqref{A-1}:
	\begin{align*}
		\begin{split}
			\frac{df}{ds}&=\frac{\tau}{\varepsilon}\left(\mathcal{G}(f)-f\right),\quad f(x(t),v(t),t)=f(x,v,t),\cr
			\frac{dx}{ds}&=v, \quad x(t^{n+1})=x,\quad 	\frac{dv}{ds}=0,\quad v(t^{n+1})=v.
		\end{split}
	\end{align*}
	Applying the implicit Euler method to this, we obtain
	\begin{align}\label{ap 1st order scheme}
		\frac{f^{n+1} - \tilde{f}^{n}}{\Delta t} &=   \frac{\tau^{n+1}}{\varepsilon}\left(\mathcal{G}({f}^{n+1})-f^{n+1}\right),
	\end{align}
	where $\tilde{f}^n:=f^n(x-v\Delta t,v)$. Now, we take moments of \eqref{ap 1st order scheme} with respect to $\phi(v)=1,v,|v|^2/2$ to obtain
	\begin{align}\label{ap int}
		\int_{\mathbb{R}} \phi(v) {f}^{n+1}dv
		= \int_{\mathbb{R}} \phi(v) \tilde{f}^n dv.
	\end{align}
By Taylor's theorem with remainder in the integral form, we get
	\begin{align}\label{R}
		\begin{split}
			\int_{\mathbb{R}} \phi(v) f^{n+1}dv
			&= \int_{\mathbb{R}} \phi(v) f^n(x-v\Delta t,v) dv\cr
			&= \int_{\mathbb{R}} \phi(v) \bigg[f^n(x,v) - v\Delta t \nabla_x f^n(x,v) \cr
			&\hspace{1.6cm}+2! \sum_{|\alpha|=2}\frac{(-v\Delta t)^\alpha}{\alpha !}  \int_0^{1} (1-u) \partial_{x}^\alpha f^n(x + u (-v\Delta t),v) du\bigg] dv\cr
			&=: \int_{\mathbb{R}} \phi(v) \left[f^n(x,v) - v\Delta t \partial_x f^{n}(x,v) \right] dv + R.
		\end{split}
	\end{align}
	From our assumption \eqref{con1} and $|\phi(v)|<1+|v|^2$, the remainder term $R$ is estimated as
	\begin{align}\label{R1}
		\begin{split}
			\left\|R\right\|_\infty &\leq\int_{\mathbb{R}^d} (1+|v|^2) \left|2! \sum_{|\alpha|=2}\frac{(-v\Delta t)^\alpha}{\alpha !}  \int_0^{1} (1-u) \partial_{x}^\alpha f^n(x + u (-v\Delta t),v) du\right|dv\cr
			&\leq \sup_{x,v}\left|\partial_{x}^2f^n(x,v)(1+|v|^2)^4\right|(\Delta t)^2 \int_{\mathbb{R}^d} \frac{6|v|^2}{(1+|v|^2)^3}\left|\int_0^1 (1-u) dy\right|dv  \cr
			&\leq \sup_{x,v}\left|\partial_{x}^2f^n(x,v)(1+|v|^2)^4\right| (\Delta t)^2 \int_{\mathbb{R}^d} \frac{3|v|^2}{(1+|v|^2)^3} dv  \cr
			&=  \mathcal{O}({\Delta t^2}).
		\end{split}
	\end{align}
We now expand $f$ as
\[
f^n= \mathcal{M}^n + \varepsilon f_1^n
\]
where $f_1^n$ satisfies the so-called compatibility conditions:
\[
\int_{\mathbb{R}^3}f_1^n (1,v,|v|^2)dv=0.
\]
The zeroth order approximation $f^n=\mathcal{M}^n$, together with \eqref{R} and \eqref{R1}, gives  
	\begin{align}\label{Euler}
		\begin{split}
			&\frac{ \rho^{n+1}-\rho^{n}}{\Delta t} + \nabla \cdot(\rho^n u^n) = \mathcal{O}({\Delta t}),\cr
			&\frac{ \rho^{n+1} u^{n+1}-\rho^{n} u^{n}}{\Delta t} + \nabla \cdot(\rho^{n} u^{n} \otimes u^n + p^{n} I) =  \mathcal{O}({\Delta t}),\cr
				&\frac{E^{n+1}- E^{n}}{\Delta t} + \nabla \cdot((E^n + p^n) u^n) =  \mathcal{O}({\Delta t}).
		\end{split}
	\end{align}
		With the first order approximation $f^n=\mathcal{M}^n +\varepsilon f_1^n$, we get
	\begin{align}\label{Theta}
		\Theta^n= T^n I + \varepsilon \Theta_1^n 
	\end{align}
	where
	\[
	\rho^n\Theta_1^n = \int_{\mathbb{R}^3} f_1^n (v-u^n)\otimes (v-u^n) dv,\quad  tr(\Theta_1^n)=0,
	\]
	\[
	Q_1^n= \int_{\mathbb{R}^3} \frac{|v-u^n|^2}{2}(v-u^n)f_1^n dv.
	\]
	Inserting $f^n= \mathcal{M}^n +\varepsilon f_1^n$ into \eqref{R}, we obtain
	\begin{align}\label{NSE BGK}
		\begin{split}
			&\frac{ \rho^{n+1}-\rho^{n}}{\Delta t} + \nabla \cdot(\rho^n u^n)  =\mathcal{O}({\Delta t}),\cr
			&\frac{ (\rho^{n+1} u^{n+1})-(\rho^{n} u^{n})}{\Delta t} + \nabla \cdot(\rho^{n} u^{n} \otimes u + p^{n} I) = -\varepsilon \nabla\cdot(\rho^n\Theta_1^n) + \mathcal{O}({\Delta t}),\cr
			&\frac{E^{n+1}- E^{n}}{\Delta t} + \nabla \cdot((E^n + p^n) u^n) = -\varepsilon \nabla\cdot(Q_1^n + \rho^n\Theta_1^nu^n)+ \mathcal{O}({\Delta t}).
		\end{split}
	\end{align}
	For the application of the Chapman-Enskog method, we expand the Gaussian $\mathcal{G}^n$ with respect to $\varepsilon$:
	$$\mathcal{G}^n = \mathcal{M}^n + \varepsilon g_1^n.$$
	The next step is to insert these expansions into the ES-BGK model
	and use the compatibility conditions, which yields for $n \geq 1$:
	\begin{align}
		\frac{\mathcal{M}^n + \varepsilon f_1^n - \left(\tilde{\mathcal{M}}^{n-1} + \varepsilon \tilde{f}_1^{n-1}\right)}{\Delta t} &=   \frac{\tau^{n}}{\varepsilon}\left(\mathcal{M}^n + \varepsilon g_1^n-\left(\mathcal{M}^n + \varepsilon f_1^n \right)\right),
	\end{align}
which can be rearranged as follows
	\[
	\frac{\mathcal{M}^{n} - \tilde{\mathcal{M}}^{n-1}}{\Delta t} =\tau^n(g_1^n-f_1^n) - \varepsilon \left(\frac{f_1^{n}- \tilde{f}_1^{n-1}}{\Delta t}\right).
	\]
	Since the assumption \eqref{con2} implies 
\begin{align*}
		\tilde{\mathcal{M}}^{n-1}&=\mathcal{M}(x-v\Delta t,v,t^{n-1})=\mathcal{M}^{n-1}(x,v) - \Delta tv \cdot  \nabla\mathcal{M}^{n-1}(x,v) + \mathcal{O}((\Delta t)^2),
\end{align*}	
	we have
		\[
	\frac{\mathcal{M}^{n} - \mathcal{M}^{n-1}}{\Delta t} +  v\cdot \nabla \mathcal{M}^{n-1} =\tau^n(g_1^n-f_1^n) - \varepsilon \left(\frac{f_1^{n}- f_1^{n-1}}{\Delta t} -v\cdot \nabla f_1^{n-1}\right) + \mathcal{O}(\Delta t).
	\]
	Then, as in Proposition 1 in \cite{FJ}, we find the explicit form of $g_1$ and $f_1$ as follows:
		\[g_1^n= \frac{\mathcal{G}^n-\mathcal{M}^n}{\varepsilon}=\left( \frac{\tau^n \rho^n}{\left(2\pi T^n\right)^{d_v/2}} + \mathcal{O}(\varepsilon^2)\right)\left(\nu \frac{\mathcal{M}^n}{2(T^n)^2}(v-u^n)\Theta_1^n(v-u^n) + \mathcal{O}(\varepsilon^2)\right),
		\]
		\[
		f_1^n=g_1^n -\frac{1}{\tau^n}\left(\frac{\mathcal{M}^{n}- \mathcal{M}^{n-1}}{\Delta t} -v\cdot \nabla \mathcal{M}^{n-1}\right) + \mathcal{O}(\varepsilon) + \mathcal{O}(\Delta t).
		\]
Consequently,
\begin{align*}
	\rho^n\Theta_1^n&=\int (v-u^n)\otimes (v-u^n) f_1^ndv\cr
%	&=\nu^n\rho^n \Theta_1^n - \frac{1}{\tau^n}\int (v-u^n)\otimes (v-u^n) \mathcal{M}^{n-1} \left[ \left(\frac{(v-u^{n-1})\otimes (v-u^{n-1})}{T^{n-1}} - \frac{|v-u^{n-1}|^2 Id}{d_v T^{n-1}}\right) : \nabla u^{n-1}\right]\cr
%	& - \frac{1}{\tau^n}\int (v-u^{n-1})\otimes (u^{n-1}-u^n) \mathcal{M}^{n-1} \left[ \left( \frac{|v-u^{n-1}|^2}{2T^{n-1}}- \frac{d_v+2}{2}\right) \frac{(v-u^{n-1})\cdot \nabla T^{n-1}}{T^{n-1}}\right]\cr
%	&- \frac{1}{\tau^n}\int (u^{n-1}-u^n)\otimes (v-u^{n-1}) \mathcal{M}^{n-1} \left[ \left( \frac{|v-u^{n-1}|^2}{2T^{n-1}}- \frac{d_v+2}{2}\right) \frac{(v-u^{n-1})\cdot \nabla T^{n-1}}{T^{n-1}}\right]\cr
%	&- \frac{1}{\tau^n}\int (u^{n-1}-u^n)\otimes (u^{n-1}-u^n) \mathcal{M}^{n-1} \left[ \left(\frac{(v-u^{n-1})\otimes (v-u^{n-1})}{T^{n-1}} - \frac{|v-u^{n-1}|^2 Id}{d_v T^{n-1}}\right) : \nabla u^{n-1}\right]\cr
%	&=\nu^n\rho^{n} \Theta_1^n - \frac{1}{\tau^n}\rho^{n-1}T^{n-1}\sigma(u^{n-1})+ 0 + 0 + \mathcal{O}((\Delta t)^2).
	&=\nu\rho^{n} \Theta_1^n - \frac{1}{\tau^n}\rho^{n-1}T^{n-1}\sigma(u^{n-1})+ \mathcal{O}(\varepsilon) + \mathcal{O}(\Delta t),\cr
	Q_1^n&=\int \frac{1}{2}|v-u^n|^2 (v-u^n) f_1^ndv\cr
	&=- \frac{1}{\tau^n}\frac{d_v+2}{2}\rho^{n-1}T^{n-1}\nabla T^{n-1} + \mathcal{O}(\varepsilon) +  \mathcal{O}(\Delta t).
\end{align*}
Therefore, the viscosity $\mu^n$ and heat conductivity $\kappa^n$ are given by
$$\mu^n=\frac{1}{(1-\nu)\tau^n}\rho^{n-1}T^{n-1}.$$
and
%$Q_1^n=-\frac{\tau^n}{1-\nu}\rho^nT^n \sigma(u^{n-1}) + \mathcal{O}(\Delta t)$
$$\kappa^n= \frac{1}{\tau^n}\frac{d_v+2}{2}\rho^{n-1}T^{n-1}.$$
This completes the proof.
%%%%%%%%%%%%%%%%%%%%%%%%%%%%%%%%%%%%%%%%%%%%%%%%%%%%%%%%%%%%%%%%%%%%%%%%%%%%%%%%%%%%%%%%%%%%%%%%%%%%%%%%%%%%%%%%%%%%%%%%%%%%%%%%%%%%%%%%%%%%%%%%%%%%%%%%%%%%%%%%%%%%%%%%%%%%%%%%%%%%%%%%%%%%%%%%%%%%%%%%%%%%

In the following two propositions, we focus on the estimates of conservation errors. For brevity of description, we denote the total mass/momentum/energy at time $t_n$ by $(m_0^n,m_1^n,m_2^n)$ and define them as follows:
$$(m_0^n,m_1^n,m_2^n):=\sum_{i,j}f_{i,j}^n \left(1,v_j,|v_j|^2/2\right) (\Delta v)^{d_v} (\Delta x)^{d_x}.$$
We start from the case of DIRK methods.
\begin{proposition}\label{prop rk}
	Assume that we adopt conservative reconstruction \cite{CBRY1} and the $L^2$-projection for the high order SL methods based on $s$-stage DIRK method \eqref{numSol RK}. 
%	If the error from the $L^2$-projection is uniformly bounded, i.e.,
%	\[
%	\left\|\sum_j \left(\mathcal{G}_{i,j}^{(\ell)}-f_{i,j}^{(\ell)}\right)\phi_j (\Delta v)^{d_v}\right\|_\infty < \delta ,\quad \ell=1,...,s
%	\]
%	for some $\delta>0$, and the numerical solutions at time $t^n$ satisfies $$\sum_{i,j}f_{i,j}^n \phi_j (\Delta v)^{d_v} (\Delta x)^{d_x}= (m_0^n,m_1^n,m_2^n),\quad \phi_j=(1,v_j,|v_j|^2/2),$$
Then, the methods are globally conservative at the discrete level on the periodic boundary condition, i.e.,
	$$\left\| (m_0^{n+1},m_1^{n+1},m_2^{n+1})-(m_0^n,m_1^n,m_2^n) \right\|_\infty=\mathcal{O}\left(\frac{\delta\Delta t}{\varepsilon}\right).$$
\end{proposition}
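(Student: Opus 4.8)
The plan is to follow the vector of total discrete mass, momentum and energy through the $s$ internal stages and to bound its drift over one time step by $\mathcal{O}(\delta\Delta t/\varepsilon)$. Write $\phi_j=(1,v_j,\frac{1}{2}|v_j|^2)^\top$ and, for a grid function $h_{i,j}$, put $\mathsf{M}[h]:=\sum_{i,j}h_{i,j}\,\phi_j\,(\Delta v)^{d_v}(\Delta x)^{d_x}$, so that $\mathsf{M}[f^n]=(m_0^n,m_1^n,m_2^n)$ and the goal is $\|\mathsf{M}[f^{n+1}]-\mathsf{M}[f^n]\|_\infty=\mathcal{O}(\delta\Delta t/\varepsilon)$. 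I would invoke two facts already at hand. \emph{Fact (i):} the conservative reconstruction of \cite{CBRY1} preserves the total under a uniform spatial shift on the periodic domain, so $\mathsf{M}[\tilde h]=\mathsf{M}[h]$ whenever $\tilde h_{i,j}$ is the reconstruction of $i\mapsto h_{i,j}$ evaluated at the shifted feet $x_i-v_{j_1}c\,\Delta t$ (the shift being independent of $i$ for fixed $j$). \emph{Fact (ii):} by Remark~\ref{rem delta} the $L^2$-projected Gaussian used at stage $k$ reproduces, up to machine precision, the prescribed moments $(\rho^{(k)}_i,\rho^{(k)}_iU^{(k)}_i,E^{(k)}_i)$, which by the explicit derivation (see \eqref{rhouE} and its $k$-stage counterpart) equal the discrete moments $\sum_j\tilde f^{(k)}_{i,j}\phi_j(\Delta v)^{d_v}$ of the transported datum; since $f^{(k)}_{i,j}$ is, by \eqref{numSol RK}, a convex combination of $\tilde f^{(k)}_{i,j}$ and $\mathcal{G}^{(k)}_{i,j}$, it follows that $\sum_j(\mathcal{G}^{(k)}_{i,j}-f^{(k)}_{i,j})\phi_j(\Delta v)^{d_v}=\mathcal{O}(\delta)$ for each $i$. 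I would also use that the macroscopic fields stay bounded (so that $\tau^{(k)}_i$ is bounded) and that the spatial domain has finite measure.

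First I would rewrite \eqref{numSol RK} in incremental form, $\varepsilon\,(f^{(k)}_{i,j}-\tilde f^{(k)}_{i,j})=\tau^{(k)}_i a_{kk}\Delta t\,(\mathcal{G}^{(k)}_i-f^{(k)}_{i,j})$, multiply by $\phi_j(\Delta v)^{d_v}(\Delta x)^{d_x}$ and sum over $i,j$. The right-hand side is $a_{kk}\Delta t\sum_i\tau^{(k)}_i\big(\sum_j(\mathcal{G}^{(k)}_{i,j}-f^{(k)}_{i,j})\phi_j(\Delta v)^{d_v}\big)(\Delta x)^{d_x}=\mathcal{O}(\delta\Delta t)$ by Fact (ii) and the boundedness of $\tau$ on a bounded domain, so $\mathsf{M}[f^{(k)}]=\mathsf{M}[\tilde f^{(k)}]+\mathcal{O}(\delta\Delta t/\varepsilon)$.

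Next I would unfold $\tilde f^{(k)}$ via \eqref{RK tilde}: it equals $\tilde f^{(k,0)}$, the characteristic transport of $f^n$, plus $\sum_{\ell=1}^{k-1}a_{k\ell}\frac{\Delta t}{\varepsilon}Q^{(k,\ell)}$, where $Q^{(k,\ell)}$ is the characteristic transport of the grid function $Q^{(\ell)}_{i,j}:=\tau^{(\ell)}_i(\mathcal{G}^{(\ell)}_i-f^{(\ell)}_{i,j})$. Applying Fact (i) to each transported quantity yields $\mathsf{M}[\tilde f^{(k,0)}]=\mathsf{M}[f^n]$ and $\mathsf{M}[Q^{(k,\ell)}]=\mathsf{M}[Q^{(\ell)}]$, and $\mathsf{M}[Q^{(\ell)}]=\sum_i\tau^{(\ell)}_i\big(\sum_j(\mathcal{G}^{(\ell)}_{i,j}-f^{(\ell)}_{i,j})\phi_j(\Delta v)^{d_v}\big)(\Delta x)^{d_x}=\mathcal{O}(\delta)$ by Fact (ii) at stage $\ell$. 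Hence $\mathsf{M}[\tilde f^{(k)}]=\mathsf{M}[f^n]+\mathcal{O}(\delta\Delta t/\varepsilon)$, and together with the previous step $\mathsf{M}[f^{(k)}]=\mathsf{M}[f^n]+\mathcal{O}(\delta\Delta t/\varepsilon)$ for $k=1,\dots,s$; taking $k=s$ and using the stiffly accurate property $f^{n+1}_{i,j}=f^{(s)}_{i,j}$ concludes the argument. (No circularity arises: the bound on $\mathsf{M}[Q^{(\ell)}]$ uses only Fact (ii) at stage $\ell$, not the conservation statement being proved.)

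The step I expect to be the main obstacle is this second one — verifying that Fact (i) transfers verbatim to the transported collision increments $Q^{(k,\ell)}$, so that no inverse power of $\Delta x$ appears in $\mathsf{M}[Q^{(k,\ell)}]$, and that $\mathsf{M}[Q^{(\ell)}]$ is genuinely $\mathcal{O}(\delta)$ rather than $\mathcal{O}(1)$, i.e.\ that the discrete relaxation operator conserves mass, momentum and energy up to the projection error. One must also check that accumulating these contributions over the $s$ stages and over $i$ keeps the hidden constant uniform — it should depend only on $\sum_{k,\ell}|a_{k\ell}|$, $\sup_{i,k}\tau^{(k)}_i$ and the length of the spatial domain, all independent of $n$, $\Delta t$ and $\varepsilon$. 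It is worth noting that if the discrete Gaussian reproduced its moments exactly ($\delta=0$), the scheme would be exactly conservative on the periodic domain; the $\mathcal{O}(\delta\Delta t/\varepsilon)$ error is only the price of the velocity-space truncation cured by the $L^2$-projection, amplified by the implicit factor $\Delta t/\varepsilon$ produced by the division by $\varepsilon$ in \eqref{RK tilde} and in the stage relation.
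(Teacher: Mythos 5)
Your proposal is correct and follows essentially the same route as the paper's proof: take moments of the stage relation \eqref{rk 1st} at $k=s$, use the conservative reconstruction on the periodic domain to identify the moments of the transported terms $\tilde f^{(s,0)}$ and $Q^{(s,\ell)}$ with those of $f^n$ and $Q^{(\ell)}$, bound each per-cell collision moment by $\delta$ via the $L^2$-projection (your Fact (ii), including the convex-combination argument, is exactly how the paper obtains $\|I_2\|_\infty\leq\max_{i,\ell}\frac{|a_{s\ell}\tau_i^{(\ell)}\Delta t|}{\varepsilon}\,\delta\,(x_R-x_L)^{d_x}$), and conclude with the stiffly accurate property. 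The only cosmetic difference is that you track the drift stage by stage, whereas the paper works directly with the final stage $k=s$.
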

\begin{proof}
	Let us consider $\phi_j=1,v_j,\frac{|v_j|^2}{2}$, and take the discrete moments in \eqref{rk 1st} for $k=s$, we get
	\begin{align*}
		&\sum_{i}\sum_{j}f_{i,j}^{(s)}\phi_j (\Delta v)^{d_v} (\Delta x)^{d_x}\cr &=\sum_{i}\sum_{j}	\left(\tilde{f}_{i,j}^{(s,0)}+\sum_{\ell=1}^{s-1}a_{s\ell}\frac{\Delta t}{\varepsilon} Q_{i,j}^{(s,\ell)} +a_{ss} \frac{\tau_i^{(s)}\Delta t}{\varepsilon} \left(\mathcal{G}_{i,j}^{(s)}-f_{i,j}^{(s)}\right)\right) \phi_j (\Delta v)^{d_v} (\Delta x)^{d_x}\cr
		&=\sum_{i}\sum_{j}	\left(f_{i,j}^{n}+\sum_{\ell=1}^{s-1}a_{s\ell}\frac{\Delta t}{\varepsilon} Q_{i,j}^{(\ell)} +a_{ss} \frac{\tau_i^{(s)}\Delta t}{\varepsilon} \left(\mathcal{G}_{i,j}^{(s)}-f_{i,j}^{(s)}\right)\right) \phi_j (\Delta v)^{d_v} (\Delta x)^{d_x}\cr
		&=\sum_{i}\sum_{j}	\left(f_{i,j}^{n}+\sum_{\ell=1}^{s}a_{s\ell}\frac{\Delta t}{\varepsilon} Q_{i,j}^{(\ell)} \right) \phi_j (\Delta v)^{d_v} (\Delta x)^{d_x}\cr
		&=I_1+I_2,
	\end{align*}
	where the second equality holds because the telescoping summations of first and second terms are preserved on the periodic boundary condition owing to the use of conservative reconstruction \cite{CBRY1}. Note that $$I_1=(m_0^n,m_1^n,m_2^n).$$ 
	In case of $I_2$, the use of $L^2$-projection implies that
	\begin{align*}
		\|I_2\|_\infty&=\left\| \sum_{i} \sum_{\ell=1}^s a_{s\ell} \frac{\tau_i^{(\ell)}\Delta t}{\varepsilon} \sum_{j} \left(\mathcal{G}_{i,j}^{(\ell)}-f_{i,j}^{(\ell)}\right)\phi_j (\Delta v)^{d_v} (\Delta x)^{d_x} \right\|_\infty\cr
		&\leq \max_{i,\ell}\frac{|a_{s\ell}\tau_i^{(\ell)}\Delta t|}{\varepsilon} \delta   (x_R-x_L)^{d_x},
	\end{align*}
where $\delta$ means the machine precision (see Remark \ref{rem delta}). Recalling the SA property of DIRK methods, we have  $f_{i,j}^{(s)}=f_{i,j}^{n+1}$. This completes the proof.
\end{proof}
Next, we provide conservation error estimates for BDF methods:
\begin{proposition}\label{prop bdf}
	Assume that we adopt conservative reconstruction \cite{CBRY1} and the $L^2$-projection for the high order SL methods based on $s$-step BDF method \eqref{numSol bdf}. 
%	If the error from the $L^2$-projection is uniformly bounded, i.e.,
%	\[
%	\left\|\sum_j \left(\tilde{\mathcal{G}}_{i,j}^{n}-\tilde{f}_{i,j}^{n}\right)\phi_j (\Delta v)^{d_v}\right\|_\infty < \delta ,\quad \ell=1,...,s
%	\]
%	for some $\delta>0$, and the numerical solutions at time $t^n$ satisfies $$\sum_{i,j}f_{i,j}^{n+1-k} \phi_j (\Delta v)^{d_v} (\Delta x)^{d_x}= (m_0^{n+1-k},m_1^{n+1-k},m_2^{n+1-k}),\quad \phi_j=(1,v_j,|v_j|^2/2),\quad k=1,...,s$$
Then, the methods are globally conservative at the discrete level on the periodic boundary condition, i.e.,
	$$\left\| (m_0^{n+1},m_1^{n+1},m_2^{n+1})-\sum_{k=1}^s \alpha_k 
	(m_0^{n+1-k},m_1^{n+1-k},m_2^{n+1-k}) \right\|_\infty=\mathcal{O}\left(\frac{\delta\Delta t}{\varepsilon}\right).$$
\end{proposition}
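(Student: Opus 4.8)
The plan is to run the argument of Proposition~\ref{prop rk} almost verbatim, now for the multi-step update \eqref{bdf form}. First I would contract \eqref{bdf form} against the discrete collision invariants $\phi_j=1,v_j,\tfrac{1}{2}|v_j|^2$ and sum over every spatial index $i$ and velocity index $j$, weighted by $(\Delta v)^{d_v}(\Delta x)^{d_x}$. The left-hand side is by definition $(m_0^{n+1},m_1^{n+1},m_2^{n+1})$, while the right-hand side splits, using $\tilde f_{i,j}^{n}=\sum_{k=1}^s\alpha_k f_{i,j}^{n,k}$, into a transported part
\[
I_1:=\sum_{i,j}\sum_{k=1}^s\alpha_k f_{i,j}^{n,k}\,\phi_j\,(\Delta v)^{d_v}(\Delta x)^{d_x}
\]
and a relaxation part
\[
I_2:=\sum_{i,j}\beta_s\frac{\tau_i^{n+1}\Delta t}{\varepsilon}\bigl(\mathcal{G}_{i,j}^{n+1}-f_{i,j}^{n+1}\bigr)\phi_j\,(\Delta v)^{d_v}(\Delta x)^{d_x}.
\]

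For $I_1$ I would use that each $f_{i,j}^{n,k}$ is obtained from the grid data $\{f_{i,j}^{n+1-k}\}_i$ by the conservative reconstruction of \cite{CBRY1} evaluated at the characteristic foot $x_i-kv_{j_1}\Delta t$; since that reconstruction preserves the total sum under an arbitrary shift on periodic boundaries, $\sum_i f_{i,j}^{n,k}=\sum_i f_{i,j}^{n+1-k}$ for each $k$ and each $j$, whence $I_1=\sum_{k=1}^s\alpha_k\,(m_0^{n+1-k},m_1^{n+1-k},m_2^{n+1-k})$, precisely the quantity subtracted in the statement. For $I_2$, the weighted $L^2$-minimization of Section~\ref{sec weight} applied to $\mathcal{G}_{i,j}^{n+1}$ (with the reference moments $\tilde{\mathcal U}_i^n$ which, by the derivation leading to \eqref{numSol bdf}, are simultaneously the discrete moments of $f_{i,j}^{n+1}$) guarantees, via Remark~\ref{rem delta}, that $\big\|\sum_j(\mathcal{G}_{i,j}^{n+1}-f_{i,j}^{n+1})\phi_j(\Delta v)^{d_v}\big\|_\infty<\delta$ for every $i$. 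Summing over $i$ then gives
\[
\|I_2\|_\infty\le \max_i\frac{|\beta_s\tau_i^{n+1}\Delta t|}{\varepsilon}\,\delta\,(x_R-x_L)^{d_x}=\mathcal{O}\left(\frac{\delta\Delta t}{\varepsilon}\right),
\]
and, since the BDF update produces $f_{i,j}^{n+1}$ directly, no stiffly-accurate reduction is needed here. Combining the three displays yields the claim.

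The only points requiring genuine care — and where I expect to spend the most effort — are: (i) verifying that the reference moments fed into the $L^2$-projection really do coincide (up to $\delta$) with the discrete moments of the updated $f_{i,j}^{n+1}$, so that the relaxation contribution is truly $\mathcal{O}(\delta)$ and not merely bounded; this amounts to retracing the moment identities of Section~\ref{sec numerical method} ($\rho_i^{n+1}=\tilde\rho_i^n$, etc.) together with the convex-combination form \eqref{numSol bdf}; and (ii) checking that the conservative reconstruction of \cite{CBRY1} tolerates the $s$ distinct displacements $kv_{j_1}\Delta t$, $k=1,\dots,s$, which it does because its shifted-summation conservation holds for an arbitrary shift. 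Everything else is a line-by-line transcription of the proof of Proposition~\ref{prop rk}.
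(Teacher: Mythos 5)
Your proposal is correct and follows essentially the same route as the paper: take moments of \eqref{bdf form} against $\phi_j$, use the shifted-summation conservation of the reconstruction on periodic boundaries to identify the multi-step term with $\sum_{k}\alpha_k(m_0^{n+1-k},m_1^{n+1-k},m_2^{n+1-k})$, and bound the relaxation contribution by $\mathcal{O}(\delta\Delta t/\varepsilon)$ via the $L^2$-projection and Remark \ref{rem delta}. Your added care about the reference moments fed to the projection coinciding with the moments of $f^{n+1}$ is a point the paper passes over silently, but it does not change the argument.
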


\begin{proof}
	As in the proof of Proposition \ref{prop rk}, we consider $\phi_j=1,v_j,\frac{|v_j|^2}{2}$, and take the discrete moments in \eqref{bdf form} to obtain
	\begin{align*}
		\sum_{i}\sum_{j}f_{i,j}^{n+1}\phi_j (\Delta v)^{d_v} (\Delta x)^{d_x} 
%		&=\sum_{i}\sum_{j}	\left(\tilde{f}_{i,j}^{n}+\beta_{s} \frac{\tau_i^{n+1}\Delta t}{\varepsilon} \left({\mathcal{G}}_{i,j}^{n+1}-f_{i,j}^{n+1}\right)\right) \phi_j (\Delta v)^{d_v} (\Delta x)^{d_x}\cr
		&=\sum_{i}\sum_{j}	\left( \sum_{k=1}^s \alpha_k f_{i,j}^{n,k} +\beta_{s} \frac{\tau_i^{n+1}\Delta t}{\varepsilon} \left({\mathcal{G}}_{i,j}^{n+1}-f_{i,j}^{n+1}\right)\right) \phi_j (\Delta v)^{d_v} (\Delta x)^{d_x}\cr
		&=\sum_{i}\sum_{j}	\left( \sum_{k=1}^s \alpha_k f_{i,j}^{n+1-k} +\beta_{s} \frac{\tau_i^{n+1}\Delta t}{\varepsilon} \left({\mathcal{G}}_{i,j}^{n+1}-f_{i,j}^{n+1}\right)\right) \phi_j (\Delta v)^{d_v} (\Delta x)^{d_x}
	\end{align*}
	where the last equality holds because the telescoping summations of the first term is preserved on the periodic boundary condition due to the use of conservative reconstruction \cite{CBRY1}. 
%	If we use the fact that $\tilde{f}_{i,j}^n$ and $f_{i,j}^{n+1}$ share the zeroth, first and second moments w.r.t. $\phi_j$, we have
Then, we have
	\begin{align*}
		&\sum_{i}\sum_{j}\left(f_{i,j}^{n+1}-\sum_{k=1}^s \alpha_k f_{i,j}^{n+1-k}\right)\phi_j (\Delta v)^{d_v} (\Delta x)^{d_x}\cr
		&=\sum_{i}\sum_{j}	\beta_{s} \frac{\tau_i^{n+1}\Delta t}{\varepsilon} \left({\mathcal{G}}_{i,j}^{n+1}-f_{i,j}^{n+1}\right) \phi_j (\Delta v)^{d_v} (\Delta x)^{d_x}.
	\end{align*}
 By the use of $L^2$-projection, the $L^\infty$-norm of right hand side is machine precision $\delta$. This implies the desired result.
\end{proof}

%Since $f_{i,j}^{(k)}= f_{i,j}^{0}$, by changing the order of summation we have
%$$I_1=\sum_{i}\left[\left(\sum_{j}f_{i,j}^{(k)} \phi_j (\Delta v)^{d_v}\right)  + err_i\right] (\Delta x)^{d_x}.$$
%Similarly, we obtain
%\begin{align*}
%	I_2=\sum_{i}\sum_{j}\frac{\tau_i^s a_{ss} \Delta t\mathcal{G}_i^{(s)}}{\varepsilon + \tau_i^s a_{ss}\Delta t} \phi_j (\Delta v)^{d_v} (\Delta x)^{d_x}\cr
%\end{align*}

\begin{proposition}
	%	The proposed methods satisfy	
	%	\begin{enumerate}
		%		\item 
		For all $\Delta t> 0$ and initial data $f^0$, the distribution function $f^n$ obtained by the proposed high order semi-Lagrangian methods converges to $\mathcal{M}^n$, i.e.,
		$\lim\limits_{\varepsilon \to 0} f^n = \mathcal{M}^n.$
		%	\end{enumerate}
	%	
\end{proposition}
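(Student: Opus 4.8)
The plan is to pass to the limit $\varepsilon\to 0$ directly in the stage update \eqref{numSol RK} (for DIRK) and in the one-step formula \eqref{numSol bdf} (for BDF), advancing one time level at a time. Since $N_x$ and $N_v$ are fixed, all quantities live in a finite-dimensional space, so ``$\lim_{\varepsilon\to 0}$'' just means entrywise convergence of finitely many reals, and by continuity of the discrete moment map it suffices to show that $\bar f^{n}:=\lim_{\varepsilon\to 0}f^{n}$ exists and, for $n\ge 1$, is a discrete local Maxwellian. Throughout I assume the scheme stays in the physical regime (solutions bounded uniformly in $\varepsilon$, $\tilde\rho_i^{(k)}>0$, $\tilde T_i^{(k)}>0$, so $\tau_i^{(k)}>0$ and the Gaussians are well defined), that $a_{kk}>0$ for every stage $k$ (true for the $L$-stable DIRK schemes considered here) and $\beta_s>0$, and, for cleanliness, that the $L^2$-projection is exact ($\delta=0$) so the moments carried by $\tilde f^{(k)}$ and by $f^{(k)}$ coincide. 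The argument is an induction on $n$, the base case $n=0$ being trivial since $f^0$ is $\varepsilon$-independent.

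The mechanism forcing the limit to be a Maxwellian is the collapse of the anisotropy of $\mathcal G$. From \eqref{new nu high} (and its BDF analogue), $\nu_i^{(k)}=\varepsilon\nu/(\varepsilon+(1-\nu)\tau_i^{(k)}a_{kk}\Delta t)\to 0$, hence $\tilde{\mathcal T}_i^{(k)}=(1-\nu_i^{(k)})T_i^{(k)}I+\nu_i^{(k)}\bigl(\tilde\Sigma_i^{(k)}/\rho_i^{(k)}-u_i^{(k)}\otimes u_i^{(k)}\bigr)\to \bar T_i^{(k)}I$ once the explicitly computed moments converge, so $\tilde{\mathcal G}_i^{(k)}$ converges to the isotropic Maxwellian $\mathcal M_i^{(k)}$ with parameters $(\bar\rho_i^{(k)},\bar U_i^{(k)},\bar T_i^{(k)})$. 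For BDF this already closes the step: in \eqref{numSol bdf} the term $\tilde f_{i,j}^{n}=\sum_k\alpha_k f_{i,j}^{n,k}$ carries no factor $1/\varepsilon$, so $\varepsilon\tilde f_{i,j}^{n}\to 0$ and its moments converge by the inductive hypothesis (transport preserves the discrete moments on the periodic grid), whence $f_{i,j}^{n+1}\to\tilde{\mathcal G}_{i,j}^{n}\big|_{\varepsilon=0}=\mathcal M_{i,j}^{n+1}$ since $\beta_s>0$.

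For DIRK the subtlety is that $\tilde f_{i,j}^{(k)}$ in \eqref{RK tilde} carries the singular terms $a_{k\ell}(\Delta t/\varepsilon)Q_{i,j}^{(k,\ell)}$ for $\ell<k$. The remedy is a nested induction on the stage index $k$ propagating two statements: (i) $f_{i,j}^{(k)}$ converges to the local Maxwellian $\mathcal M_i^{(k)}$, and (ii) $\varepsilon^{-1}\bigl(\mathcal G_i^{(k)}-f_{i,j}^{(k)}\bigr)$ stays bounded and converges, so that $(\Delta t/\varepsilon)Q_{i,j}^{(k,\ell)}$ --- the characteristic-shifted value of $(\tau_i^{(\ell)}\Delta t/\varepsilon)\bigl(\mathcal G_i^{(\ell)}-f_{i,j}^{(\ell)}\bigr)$ --- converges to a finite limit while $Q_{i,j}^{(k,\ell)}$ itself tends to $0$. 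Granting (ii) for all $\ell<k$: then $\tilde f_{i,j}^{(k)}$ converges to a finite limit and $\varepsilon\tilde f_{i,j}^{(k)}=\varepsilon\tilde f_{i,j}^{(k,0)}+\sum_{\ell<k}a_{k\ell}\Delta t\,Q_{i,j}^{(k,\ell)}\to 0$; inserting this into \eqref{numSol RK} and using $a_{kk}>0$, $\bar\tau_i^{(k)}>0$ gives $f_{i,j}^{(k)}\to\mathcal G_i^{(k)}\big|_{\varepsilon=0}=\mathcal M_i^{(k)}$, which is (i); and the algebraic identity $\mathcal G_i^{(k)}-f_{i,j}^{(k)}=\varepsilon\bigl(\mathcal G_i^{(k)}-\tilde f_{i,j}^{(k)}\bigr)/(\varepsilon+\tau_i^{(k)}a_{kk}\Delta t)$, which follows from \eqref{numSol RK}, yields (ii) for stage $k$. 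The base stage $k=1$ is the same computation with no source terms. Finally, the stiffly-accurate property gives $f_{i,j}^{n+1}=f_{i,j}^{(s)}\to\mathcal M_i^{(s)}=\mathcal M_i^{n+1}$, closing the induction on $n$; the first-order scheme \eqref{numsol} is the one-stage instance and needs no separate treatment.

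The only genuinely delicate point is the bookkeeping in the DIRK nested induction: one must see that each stage source $Q^{(k,\ell)}$ vanishes precisely at rate $O(\varepsilon)$, so that $\tilde f^{(k)}$ retains a finite limit while $\varepsilon\tilde f^{(k)}\to 0$ --- this is exactly what (i) and (ii) encode together, and why they cannot be proved separately. Everything else is routine: convergence of the finitely many moment values, the limit $\nu_i^{(k)}\to 0$, positivity of the diagonal Runge--Kutta coefficients and of $\beta_s$ for the $L$-stable methods, and positivity of the limiting relaxation times (a consequence of $\bar\rho_i^{(k)}>0$, $\bar T_i^{(k)}>0$). The exactness of the $L^2$-projection is used only to identify the limiting moments with the transported ones; for fixed $\varepsilon$ the machine-precision residual $\delta$ is harmless, though it is cleanest to state the limit for the idealized scheme with $\delta=0$.
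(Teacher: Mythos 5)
Your proof is correct and follows essentially the same route as the paper: the key observations --- that $\nu_i^{(k)}\to 0$ collapses the temperature tensor to $T_i^{(k)}I$, that the update formula yields $\mathcal{G}_{i,j}^{(k)}-f_{i,j}^{(k)}=O(\varepsilon)$ so the stage sources $(\Delta t/\varepsilon)Q_{i,j}^{(k,\ell)}$ remain bounded while $Q_{i,j}^{(k,\ell)}\to 0$, and that the convex-combination update then forces $f_{i,j}^{(k)}\to\mathcal{M}_{i,j}^{(k)}$, closed by stiff accuracy --- are exactly those in the paper's argument. The only difference is that you formalize the stage bookkeeping as a nested induction valid for a general $s$-stage DIRK method and an outer induction on $n$, whereas the paper writes the argument out explicitly only for DIRK2 and BDF2 ``for simplicity.''
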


\begin{proof}
	\noindent For simplicity, we only prove the case of second order methods, DIRK2 and BDF2.\\
	$\bullet$ \textbf{Proof for DIRK2 method}:	
	For $k=1$, we have
	%		 \begin{align*}
		%		 	\Sigma_i^{(1)}&= \frac{\varepsilon}{\varepsilon + (1-\nu)\tau_i^{(1)} a_{11}\Delta t}\tilde{\Sigma}_i^{(1)}  + \frac{(1-\nu)\tau_i^{(1)} a_{11}\Delta t}{\varepsilon + (1-\nu)\tau_i^{(1)} a_{11}\Delta t}\rho_i^{{(1)}}\left(  T_i^{(1)}Id + u_i^{(1)}\otimes u_i^{(1)}\right) \cr
		%		 	&=\rho_i^{(1)}\left(  T_i^{(1)}Id + u_i^{(1)}\otimes u_i^{(1)}\right) +\frac{\varepsilon}{\varepsilon + (1-\nu)\tau_i^{(1)} a_{11}\Delta t}\left(\tilde{\Sigma}_i^{(1)}- \rho_i^{{(1)}}\left(  T_i^{(1)}Id + u_i^{(1)}\otimes u_i^{(1)}\right)\right).
		%		 \end{align*}
	%		 This further implies that  
	%		 $$\rho_i^{(1)}\Theta_i^{(1)} = \left[\Sigma_i^{(1)} - \rho_i^{(1)} u_i^{(1)}\otimes u_i^{(1)}\right] = \rho_i^{(1)} T_i^{(1)}Id ++\frac{\varepsilon}{\varepsilon + (1-\nu)\tau_i^{(1)} a_{11}\Delta t}\left(\tilde{\Sigma}_i^{(1)}- \rho_i^{{(1)}}\left(  T_i^{(1)}Id + u_i^{(1)}\otimes u_i^{(1)}\right)\right)$$ 
	%		 and hence
	\begin{align*}
		\mathcal{T}_i^{(1)}
		&=(1-\nu_i^{(1)})T_i^{(1)} I + \nu_i^{(1)} \left(\frac{\tilde{\Sigma}_i^{(1)}}{\rho_i^{(1)}}- u_i^{(1)}\otimes u_i^{(1)} \right)\cr
		&=\left(1- \frac{\varepsilon \nu}{\varepsilon + (1-\nu)\tau_i^{(1)} a_{11}\Delta t}\right)T_i^{(1)} I + \frac{\varepsilon \nu}{\varepsilon + (1-\nu)\tau_i^{(1)} a_{11}\Delta t}\left(\frac{\tilde{\Sigma}_i^{(1)}}{\rho_i^{(1)}}- u_i^{(1)}\otimes u_i^{(1)} \right)\cr
		&=T_i^{(1)} I + \frac{\varepsilon \nu}{\varepsilon + (1-\nu)\tau_i^{(1)} a_{11}\Delta t}\left(\frac{\tilde{\Sigma}_i^{(1)}}{\rho_i^{(1)}}- u_i^{(1)}\otimes u_i^{(1)} - T_i^{(1)} I \right).
	\end{align*}
	Then, we get
	\begin{align*}
		\mathcal{T}_i^{(1)}
		&=T_i^{(1)} I + \mathcal{O}(\varepsilon).
	\end{align*}
	In the limit $\varepsilon\to 0$, we obtain
	$$\lim\limits_{\varepsilon\to 0}\mathcal{G}_{i,j}^{(1)}=\mathcal{M}_{i,j}^{(1)}=
	\frac{\rho_i^{(1)}}{(2 \pi T_i^{(1)})^{\frac{d_v}{2}}} \exp \left(-\frac{|v_j-U_i^{(1)}|^2}{2T_i^{(1)}}\right)=:\mathcal{M}_{i,j}^{(1)}.$$
	We further note that
	\begin{align*}
		f_{i,j}^{(1)} - \mathcal{G}_{i,j}^{(1)} &=\frac{\varepsilon\tilde{f}_{i,j}^{(1)} +   \tau_i^{(1)} a_{11} \Delta t\mathcal{G}_i^{(1)}}{\varepsilon + \tau_i^{(1)} a_{11}\Delta t}-\mathcal{G}_{i,j}^{(1)}\cr
		&=\frac{\varepsilon}{\varepsilon + \tau_i^{(1)} a_{11}\Delta t}(\tilde{f}_{i,j}^{(1)} - \mathcal{G}_i^{(1)})\cr
		&=\mathcal{O}(\varepsilon).
	\end{align*}
	Then, $Q_{i,j}^{(1)}=\tau_{i,j}^{(1)}\left(f_{i,j}^{(1)} - \mathcal{G}_{i,j}^{(1)}\right)=\mathcal{O}(\varepsilon).$
	Now we move on to the second stage value:
	\begin{align*}
		f_{i,j}^{(2)} &=\frac{\varepsilon\tilde{f}_{i,j}^{(2)} +   \tau_i^{(2)} a_{22} \Delta t\mathcal{G}_i^{(2)}}{\varepsilon + \tau_i^{(2)} a_{22}\Delta t}.
	\end{align*}	
	As in the first stage, we have
	\begin{align*}
		\mathcal{T}_i^{(2)}
		&=(1-\nu_i^{(2)})T_i^{(2)} I + \nu_i^{(2)} \left(\frac{\tilde{\Sigma}_i^{(2)}}{\rho_i^{(2)}}- u_i^{(1)}\otimes u_i^{(2)} \right)\cr
		&=\left(1- \frac{\varepsilon \nu}{\varepsilon + (1-\nu)\tau_i^{(2)} a_{22}\Delta t}\right)T_i^{(2)} I + \frac{\varepsilon \nu}{\varepsilon + (1-\nu)\tau_i^{(2)} a_{22}\Delta t}\left(\frac{\tilde{\Sigma}_i^{(2)}}{\rho_i^{(2)}}- u_i^{(2)}\otimes u_i^{(2)} \right)\cr
		&=T_i^{(2)} I + \frac{\varepsilon \nu}{\varepsilon + (1-\nu)\tau_i^{(2)} a_{22}\Delta t}\left(\frac{\tilde{\Sigma}_i^{(2)}}{\rho_i^{(2)}}- u_i^{(2)}\otimes u_i^{(2)} - T_i^{(2)} I \right).
	\end{align*}
	Since $\tilde{f}_{i,j}^{(2)}= \tilde{f}_{i,j}^{(2,0)}+a_{21}\frac{\Delta t}{\varepsilon} Q_{i,j}^{(2,1)}=\mathcal{O}(1)$, $\tilde{\Sigma}_i^{(2)}=\mathcal{O}(1)$. As a consequence, we get
	\begin{align*}
		\mathcal{T}_i^{(2)}
		&=T_i^{(2)} I + \mathcal{O}(\varepsilon).
	\end{align*}
	Thus, in the limit $\varepsilon \to 0$, the Gaussian function converges to isotropic Maxwellian 
	$$\lim\limits_{\varepsilon\to 0}\mathcal{G}_{i,j}^{(2)}=
	\frac{\rho_i^{(2)}}{(2 \pi T_i^{(2)})^{\frac{d_v}{2}}} \exp \left(-\frac{|v_j-U_i^{(2)}|^2}{2T_i^{(2)}}\right)=:\mathcal{M}_{i,j}^{(2)}.$$
	To sum up, 
	\begin{align*}
		\lim\limits_{\varepsilon\to 0}f_{i,j}^{(2)} &=\lim\limits_{\varepsilon\to 0} \frac{\varepsilon\tilde{f}_{i,j}^{(2)} +   \tau_i^{(2)} a_{22} \Delta t\mathcal{G}_i^{(2)}}{\varepsilon + \tau_i^{(2)} a_{22}\Delta t}= \mathcal{M}_{i,j}^{(2)},
	\end{align*}	
	which together with the SA property of the DIRK method implies the desired result.

	\noindent $\bullet$ \textbf{Proof of BDF2 methods}: The asymptotic limit $\varepsilon \to 0$ gives
	\begin{align*}
		\lim\limits_{\varepsilon \to 0} \Sigma_i^{n+1}&=\lim\limits_{\varepsilon \to 0}\left[\frac{\varepsilon}{\varepsilon + (1-\nu)\tau_i^{n+1} \beta_s\Delta t}\tilde{\Sigma}_i^{n}  + \frac{(1-\nu)\tau_i^{n+1} \beta_s\Delta t}{\varepsilon + (1-\nu)\tau_i^{n+1} \beta_s\Delta t}\rho_i^{n+1}\left(  T_i^{n+1}I + u_i^{n+1}\otimes u_i^{n+1}\right)\right]\cr
		&=\rho_i^{n+1}\left(  T_i^{n+1}I + u_i^{n+1}\otimes u_i^{n+1}\right).
	\end{align*}
	This further implies that  
	$$ \lim\limits_{\varepsilon \to 0}\rho_i^{n+1}\Theta_i^{n+1} = \lim\limits_{\varepsilon \to 0}\left[\Sigma_i^{n+1} - \rho_i^{n+1} u_i^{n+1}\otimes u_i^{n+1}\right] = \rho_i^{n+1} T_i^{n+1}I$$ 
	and hence
	\begin{align*}
		\lim\limits_{\varepsilon \to 0}\mathcal{T}_i^{n+1}
		&=T_i^{n+1} I.
	\end{align*}
	To sum up, in the limit $\varepsilon \to 0$, the distribution function converges to isotropic Maxwellian 
	\[
	\mathcal{M}_{i,j}^{n+1}=
	\frac{\rho_i^{n+1}}{(2 \pi T_i^{n+1})^{\frac{d_v}{2}}} \exp \left(-\frac{|v_j-U_i^{n+1}|^2}{2T_i^{n+1}}\right).
	\]
This completes the proof.	
\end{proof}

%%%%%%%%%%%%%%%%%%%%%%%%%%%%%%%%%%%%%%%%%%%%%%%%%%%%%%%%%%%%%%%%%%%%%%%%%%%%%%%%%%%%%%%%%%%%%%%%%%%%%%%%%%%%%%%%%%%%%%%%%%%%%%%%%%%%%%%%%%%%%%%%%%%%%%%%%%%%%%%%%%%%%%%%%%%%%

\section{Numerical tests}\label{sec numerical test}

Throughout this section, we consider the truncated velocity domain $[-v_{max},v_{max}]^{d_v}$ ($d_v=2,3$) with $N_v+1$ uniform grids. We restrict our numerical tests into 1D space dimension. Hence, we use the time step $\Delta t$ that corresponds to CFL number defined by
\begin{align}\label{CFL}
%	(1D):\quad 
	\text{CFL}:&= v_{max}\frac{\Delta t}{\Delta x}.
%	(2D):\quad \text{CFL}:&= \frac{v_{max}}{\Delta x} + \frac{v_{max}}{\Delta y}.
\end{align}
The choice of free parameter $\nu$ and relaxation term $\tau$ will be explained in each problem.

\subsection{Accuracy test}
In this test, we confirm the accuracy of the proposed high order method for ES-BGK model of one dimension in space and two dimensions in velocity. For this, we consider smooth profile of the solution adopted in \cite{PP2}. We take the Maxwellian as initial data, which corresponds to the following macroscopic variables :
\begin{align}\label{init accuracy}
	\begin{split}
		\rho_0&=1,\quad u_0=\left(\frac{1}{\sigma}\big(\exp\left(-(\sigma x-1)^2\right)-2\exp(-(\sigma x+3)^2)\big),0\right),\quad T_0=1.
	\end{split}
\end{align}
We consider uniform grid $\Delta x$ on the periodic interval $[-1,1]$, and truncated velocity domain $[-10,10]^2$ with $N_v=32$. That is, we use $33^2$ velocity grids. The numerical solutions are computed up to $t=0.32$ because shock appears around $t\approx 0.35$ for small Knudsen number. The time step is fixed according to the CFL condition \ref{CFL} with CFL$=4$. We use the parameter $\nu=-1$, and relaxation term $\tau=1$ for simplicity. Note that the ES-BGK model is well-defined for $\nu=-1$ when $d_v=2$.

\begin{center}
	\begin{table}[htbp]
		\centering	
		{\begin{tabular}{|cccccccccccc|}
				\hline
				\multicolumn{10}{ |c| }{Relative $L^1$ error and order of density} \\ \hline
				\multicolumn{1}{ |c }{}&
				\multicolumn{1}{ |c| }{}& \multicolumn{2}{ c  }{$\varepsilon=10^{-4}$} & \multicolumn{2}{ |c }{$\varepsilon=10^{-3}$}& \multicolumn{2}{ |c| }{$\varepsilon=10^{-2}$} &
				\multicolumn{2}{ |c| }{$\varepsilon=10^{-1}$}&
				\multicolumn{2}{ |c| }{$\varepsilon=10^{-0}$} \\ \hline
				\multicolumn{1}{ |c }{}&
				\multicolumn{1}{ |c|  }{$(N_x,2N_x)$} &
				\multicolumn{1}{ c  }{error} &
				\multicolumn{1}{ c|  }{rate} &
				\multicolumn{1}{ |c  }{error} &
				\multicolumn{1}{ c  }{rate} &
				\multicolumn{1}{ |c  }{error} &
				\multicolumn{1}{ c|  }{rate} &
				\multicolumn{1}{ |c  }{error} &
				\multicolumn{1}{ c|  }{rate} &
				\multicolumn{1}{ |c  }{error} &
				\multicolumn{1}{ c|  }{rate}     \\ 
				\hline
				\hline	
				\multicolumn{1}{ |c }{}&
				\multicolumn{1}{ |c|  }{$(80,160)$}&4.292e-04
				
				&2.63
				&3.353e-04
				
				&2.73&1.111e-04
				
				&2.79&3.079e-05
				
				&2.74&2.174e-04
				
				&2.74
				\\
				\multicolumn{1}{ |c }{RK2}&
				\multicolumn{1}{ |c|  }{$(160,320)$}&6.943e-05
				
				&2.68 
				&5.059e-05
				
				&2.75&1.602e-05
				
				&2.66&4.617e-06
				
				&2.90&3.258e-05
				
				&2.95
				\\
				\multicolumn{1}{ |c }{}&
				\multicolumn{1}{ |c|  }{$(320,640)$}&1.083e-05
				&     
				&7.504e-06
				&&2.535e-06
				&&6.188e-07
				&&4.217e-06
				&
				\\
				\hline
				\hline
				\multicolumn{1}{ |c }{}&
				\multicolumn{1}{ |c|  }{$(80,160)$}&9.313e-04
				
				&2.25
				&8.188e-04
				
				&2.29&3.012e-04
				
				&2.24&5.799e-05
				
				&2.46&3.437e-04
				
				&2.66
				\\
				\multicolumn{1}{ |c }{BDF2}&\multicolumn{1}{ |c|  }{$(160,320)$}&1.951e-04
				
				&2.35     
				&1.671e-04
				
				&2.33&6.383e-05
				
				&2.19&1.053e-05
				
				&2.38&5.454e-05
				
				&2.93
				\\
				\multicolumn{1}{ |c }{}&
				\multicolumn{1}{ |c|  }{$(320,640)$}&3.816e-05
				&     
				&3.327e-05
				&&1.3978e-05
				&&2.019e-06
				&&7.141e-06
				&
				\\
				\hline
				\hline
				\multicolumn{1}{ |c }{}&
				\multicolumn{1}{ |c|  }{$(80,160)$}&2.015e-04
				
				&2.42
				&1.399e-04
				
				&2.54&2.139e-05
				
				&2.75&5.024e-06
			
				&4.74&3.434e-05
				
				&4.80
				\\
				\multicolumn{1}{ |c }{RK3}&
				\multicolumn{1}{ |c|  }{$(160,320)$}&3.753e-05
				
				&2.08     
				&2.406e-05
				
				&2.28&3.176e-06
				
				&2.74&	1.875e-07
				
				&4.48&1.236e-06
				
				&4.97
				\\
				\multicolumn{1}{ |c }{}&
				\multicolumn{1}{ |c|  }{$(320,640)$}&8.883e-06
				&
				%2.10
				&4.940e-06
				&
				%2.40
				&4.762e-07
				&
				%2.84
				&8.380e-09
				&
				%3.42
				&3.943e-08
				&
				%5.02
				
				\\
%				\multicolumn{1}{ |c }{}&
%\multicolumn{1}{ |c|  }{$(640,1280)$}&2.077e-06
%
%&2.16
%&9.388e-07
%
%&2.54&6.654e-08
%
%&2.92&7.835e-10
%
%&3.15&1.212e-09
%
%&5.06
%\\
%				\multicolumn{1}{ |c }{}&
%\multicolumn{1}{ |c|  }{$(1280,2560)$}&4.659e-07
%&     
%&1.617e-07
%&&8.791e-09
%&&8.833e-11
%&&3.624e-11
%&
%\\
				\hline
				\hline
				\multicolumn{1}{ |c }{}&
				\multicolumn{1}{ |c|  }{$(80,160)$}&4.734e-04
				
				&2.32    
				&3.580e-04
				
				& 2.28
				
				&1.051e-04
				&2.17
				&1.395e-05
				
				&3.81
				&7.693e-05
				
				&4.75
				
				\\
				\multicolumn{1}{ |c }{BDF3}&
				\multicolumn{1}{ |c|  }{$(160,320)$}&9.483e-05
				
				&2.70     
				&7.368e-05
				
				&2.56&2.345e-05
				
				&2.54&9.936e-07
				
				&3.05&2.854e-06
				
				&5.02
				\\
				\multicolumn{1}{ |c }{}&
				\multicolumn{1}{ |c|  }{$(320,640)$}&1.464e-05  
				&
				%2.86
				&1.251e-05
				&
				%2.37
				&4.029e-06
				&
				%2.74
				&1.200e-07&
				%2.99
				&8.793e-08& %4.90
				
				\\
%				\multicolumn{1}{ |c }{}&
%				\multicolumn{1}{ |c|  }{$(640,1280)$}&2.022e-06
%				  
%				&2.79&2.420e-06
%				
%				&2.20&6.023e-07
%				&2.86
%				&1.513e-08
%				
%				&2.99&2.936e-09
%				
%				&3.54
%				\\
%				\multicolumn{1}{ |c }{}&
%				\multicolumn{1}{ |c|  }{$(1280,2560)$}&  2.914e-07
%				&&5.282e-07
%				&&8.294e-08
%				&&1.905e-09
%				&&2.517e-10
%				&
%				\\
				\hline
		\end{tabular}}
		\caption{Accuracy test for the x1d-v2d ES-BGK equation. Initial data is given in \eqref{init accuracy}. 
		}\label{2d accuracy}
	\end{table}
\end{center}

In Table \ref{2d accuracy}, we report relative $L^1$ error of density and corresponding order of convergence. We observe the expected order of convergece for each scheme. For example, RK2 and BDF2 based methods are combined with QCWENO23, in which we can expect second order in time and third order in space. The numerical results correspond to the expected ones. In case of RK3 and BDF3 methods, we adopt QCWENO35. Both schemes also attain expected order of convergence, third order in time and fifth order in space. Although we observe order reductions for RK3 for small Knudsen numbers, it is a typical problem of DIRK3 method. %In case of BDF3, order reduction occurs due to the use of the smae DIRK3 for initializations.

Note that for relatively large Knudsen numbers time error becomes negligible compared to spatial errors because the collision term is very small. In this regime, we observe spatial errors.
In contrast, for relatively small Knudsen numbers time error becomes dominant compared to spatial errors as collision term becomes larger.

\subsection{Riemann problem}
In this problem, we consider a Riemann problem in 1D space and 2D velocity domain. The same test has been adopted in \cite{FJ} to show the consistency between the ES-BGK model and BTE or NSE. As initial macroscopic states, we use
\begin{align*}
	(\rho_0,\, u_x,\, u_y,\, T)=
	\begin{cases}
		(1,M\sqrt{2},0,1),\quad \text{if} \quad -1\leq x\leq0.5\\
		\left(\frac{1}{8},0,0,\frac{1}{4}\right),\quad \text{otherwise}
	\end{cases}.
\end{align*}
where $M=2.5$ is the Mach number. Here we impose the free-flow boundary condition in space $x\in [-1,2]$. We truncate the velocity domain with $v_{max}=15$ and take the final time as $T_f=0.4$. We use the time step that corresponds to CFL$=2$. Here we use $\nu=-1$ and $\tau=0.9\pi\rho/2$ following \cite{FJ}.
This implies that the viscosity and heat conductivity are given by
\[
\mu=\frac{1}{0.9\pi}T,\quad \kappa=\frac{4}{0.9\pi}T.
\]
We use this transport coefficients for NSE. We remark that this choice matches the viscosity of ES-BGK model and the one derived from BTE for Maxwellian molecules \cite{FJ}.

\begin{figure}[h]
	\centering
	\begin{subfigure}[h]{0.49\linewidth}
		\includegraphics[width=1\linewidth]{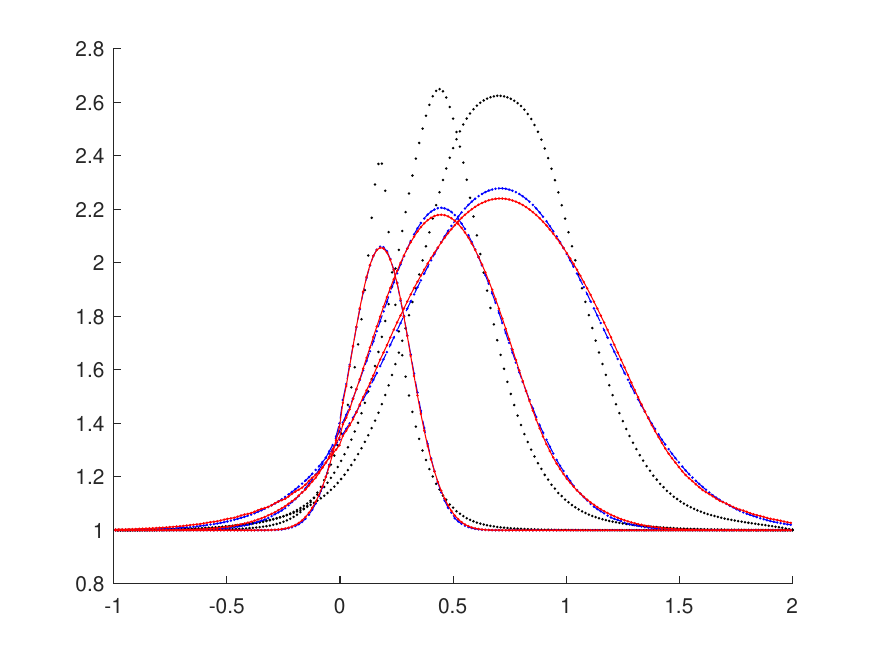}
		\subcaption{$\rho$}
	\end{subfigure}	
	\begin{subfigure}[h]{0.49\linewidth}
		\includegraphics[width=1\linewidth]{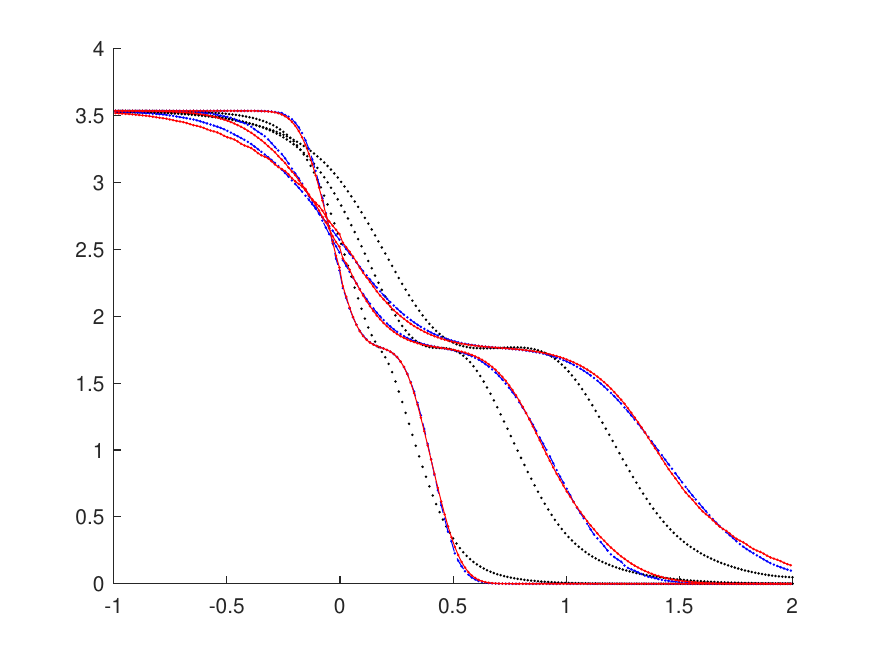}
		\subcaption{$u_1$}
	\end{subfigure}	
	\begin{subfigure}[h]{0.49\linewidth}
		\includegraphics[width=1\linewidth]{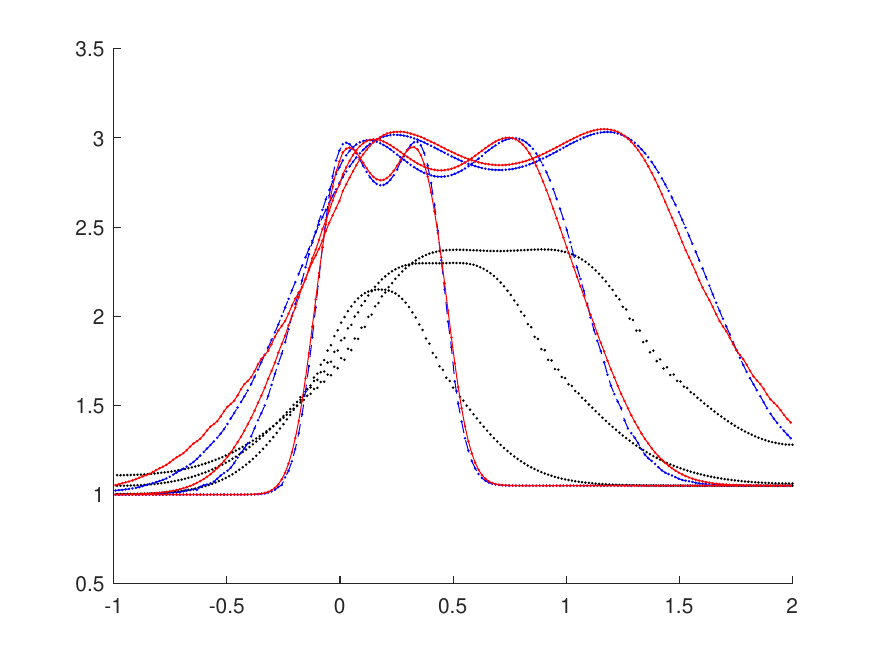}
		\subcaption{$T$}
	\end{subfigure}	
	\begin{subfigure}[h]{0.49\linewidth}
		\includegraphics[width=1\linewidth]{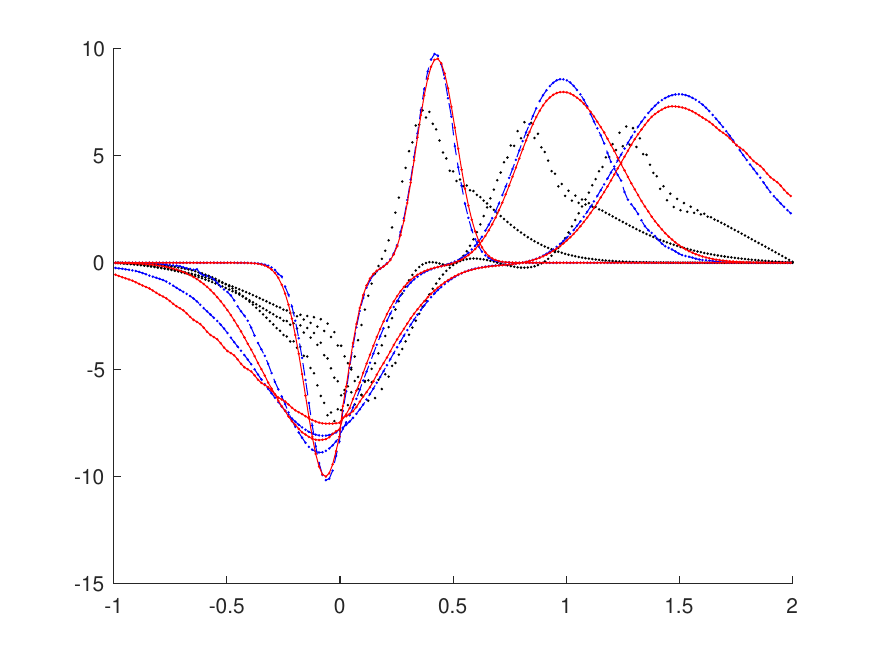}
		\subcaption{$Q$}
	\end{subfigure}	
	\caption{A Riemann problem. Comparison of three solutions: BTE(blue) ES-BGK(red) NSE(black) for $\varepsilon=0.5$, CFL=$2$, $N_x=200$, $N_v=160$.}\label{fig2}
\end{figure}		
\begin{figure}[h]	
	\centering
	\begin{subfigure}[h]{0.49\linewidth}
		\includegraphics[width=1\linewidth]{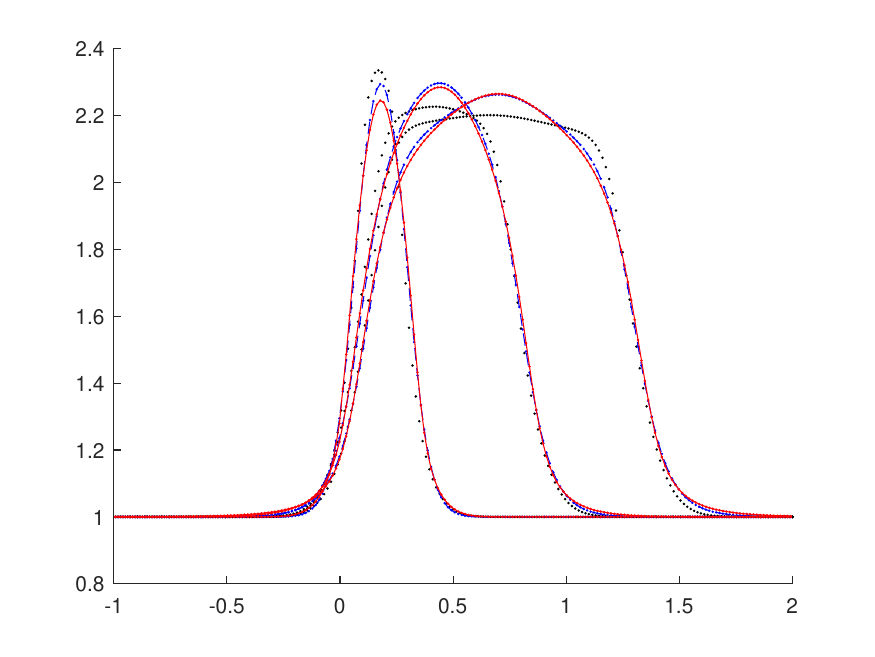}
		\subcaption{$\rho$}
	\end{subfigure}	
	\begin{subfigure}[h]{0.49\linewidth}
		\includegraphics[width=1\linewidth]{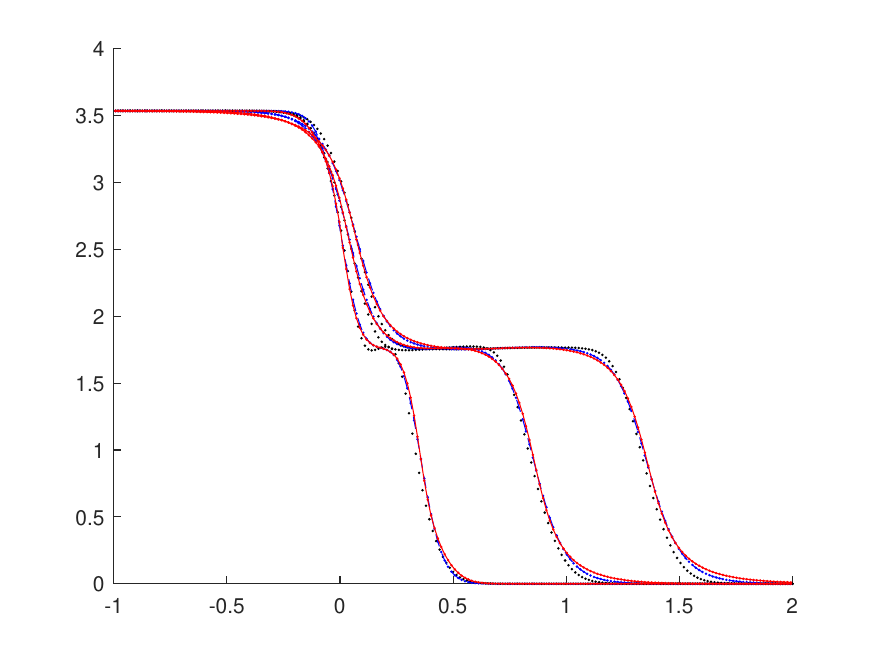}
		\subcaption{$u_1$}
	\end{subfigure}	
	\begin{subfigure}[h]{0.49\linewidth}
		\includegraphics[width=1\linewidth]{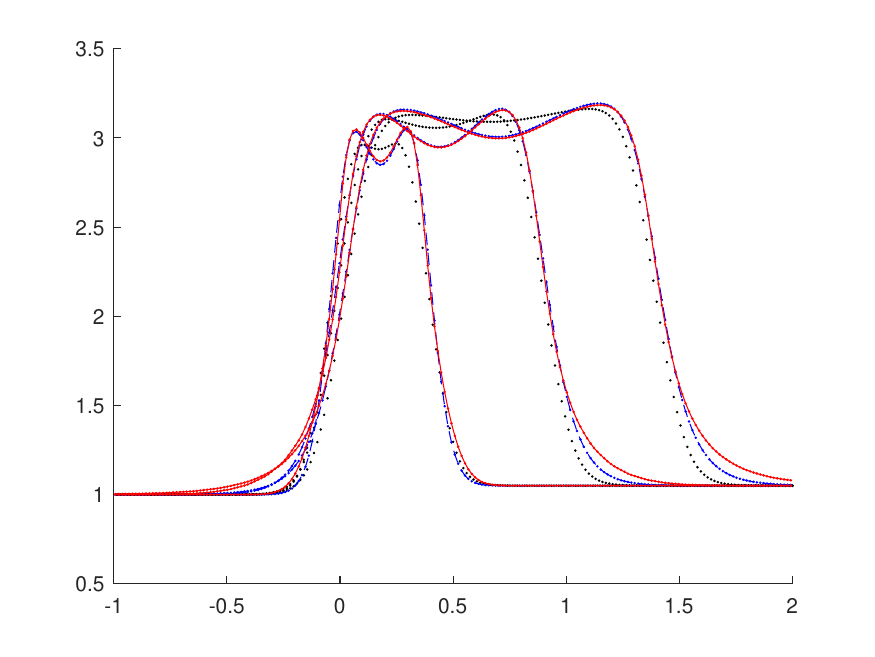}
		\subcaption{$T$}
	\end{subfigure}	
	\begin{subfigure}[h]{0.49\linewidth}
		\includegraphics[width=1\linewidth]{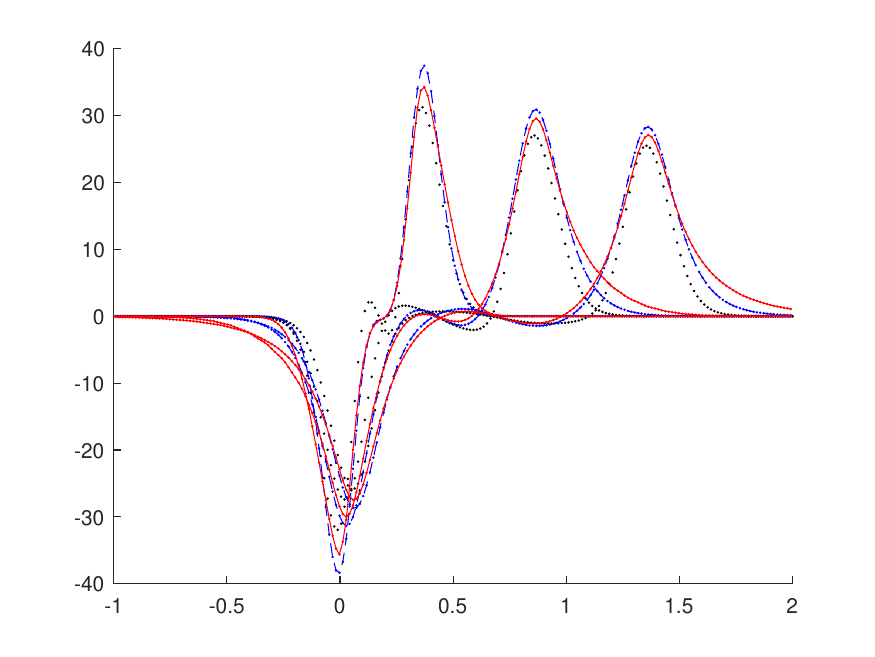}
		\subcaption{$Q$}
	\end{subfigure}	
	\caption{A Riemann problem. Comparison of three solutions: BTE(blue), ES-BGK(red), NSE(black) for $\varepsilon=0.1$, CFL=$2$, $N_x=200$, $N_v=96$.}\label{fig3}
	
%	\centering
%	\begin{subfigure}[h]{0.24\linewidth}
%		\includegraphics[width=1\linewidth]{figures/test2_Kn0.001_comparison_rho}
%		\subcaption{$\rho$}
%	\end{subfigure}	
%	\begin{subfigure}[h]{0.24\linewidth}
%		\includegraphics[width=1\linewidth]{figures/test2_Kn0.001_comparison_u1}
%		\subcaption{$u_1$}
%	\end{subfigure}	
%	\begin{subfigure}[h]{0.24\linewidth}
%		\includegraphics[width=1\linewidth]{figures/test2_Kn0.001_comparison_T}
%		\subcaption{$T$}
%	\end{subfigure}	
%	\begin{subfigure}[h]{0.24\linewidth}
%		\includegraphics[width=1\linewidth]{figures/test2_Kn0.001_comparison_Q}
%		\subcaption{$Q$}
%	\end{subfigure}	
%	\caption{A Riemann problem. Comparison of three solutions: BTE(blue) ES-BGK(red) NSE(black) for $\varepsilon=0.001$, CFL=$2$, $N_x=200$, $N_v=160$.}
\end{figure}	
In Figure \ref{fig2}, when Knudsen number is large $\varepsilon=0.5$, numerical solutions of ES-BGK and BTE are close to each other. On the other hand, the NSE solution is very far from the others. As Knudser number becomes smaller, however, in Figure \ref{fig3} it appears that the three solutions are getting closer to each other. This confirms the results of Chapmann-Enskog expansion.
\subsection{Lax shock tube problem}
In this test, we consider 1D-3D Lax shock tube problem in \cite{HZ}. For this, we set the relaxation term $\tau$ for ES-BGK model as
$$\tau=\frac{\rho T}{(1-\nu)\mu}=\frac{2}{3}\rho \sqrt{T}.$$ 
which yields the viscosity and heat conductivity:
$$\mu= \sqrt{T},\quad\kappa= \frac{d_v+2}{2}\mu (1-\nu)=\frac{15}{4}\sqrt{T}.$$
Here the free parameter $\nu=-1/2$ is taken. This implies that the Prandtl number is given by $Pr=\frac{d_v+2}{2}\frac{\mu}{\kappa}=\frac{1}{1-\nu}=\frac{2}{3}$. For initial macroscopic variables, we consider
\begin{align*}
	(\rho_0,\, u_x,\, u_y,\, u_z,\, T)=
	\begin{cases}
		(0.445, 0.698, 0, 0, 3.528),\quad \text{if} \quad -5\leq x\leq 0\\
		\left(0.5, 0, 0, 0, 0.571\right),\quad 0 < x\leq 5
	\end{cases}.
\end{align*}
on the free-flow condition $x\in [-5,5]$ and the truncated velocity domain with $v_{max}=20$.
We use the local Maxwellian as initial data.
%\[
%f_0 = \mathcal{M}.
%\]
%We take well-prepared initial data as follows:
%\[
%f_0 = \mathcal{M}- \frac{\varepsilon}{\tau}(I-\Pi_\mathcal{M})(v_1\partial_x \mathcal{M}).
%\]
%For a given $f$, the projection operator $\Pi_\mathcal{M}$ is defined by
%$$
%\Pi_\mathcal{M}(f) = \frac{1}{\rho}\biggl[ \langle f \rangle + \frac{(v-u) \cdot \langle (v-u)f \rangle }{T} + \left( \frac{|v-u^2|}{2T}- \frac{d_v}{2}\right)\frac{2}{d_v} \biggl\langle \left( \frac{|v-u^2|}{2T}- \frac{d_v}{2}\right)f \biggr\rangle \biggr]\mathcal{M}
%$$
%where the macroscopic variables $\rho$, $u$, $T$ are obtained by $f$, $d_v=3$ in this test, and $\langle \phi(v) \rangle := \int_{\mathbb{R}^{d_v}} \phi(v)\,dv$.
%The details on the properties of the operator, we refer to \cite{BLM,HZ}.
  
In Figure \ref{fig lax2}, we compare the numerical solutions to ES-BGK model computed by $N_x=200$, $N_v=40$, CFL$=2$ with reference solutions to Navier-Stokes equations. We observe that macroscopic variables for ES-BGK model and NSEs show good agreement. In particular, two models are more consistent when Knudsen numbers are relatively small, i.e., $\varepsilon=10^{-2},10^{-3}$. Notice that near the contact discontinuity the heat flux is bigger than near the shock, mainly because of the larger gradient of temperature at the contact. We note that the discrepancy between two solutions for $\varepsilon=10^{-1}$ are bigger than the other cases, which again confirms the difference of kinetic and fluid models in the rarefied regime.

\begin{figure}[h]
	\centering
	\begin{subfigure}[h]{0.49\linewidth}
		\includegraphics[width=1\linewidth]{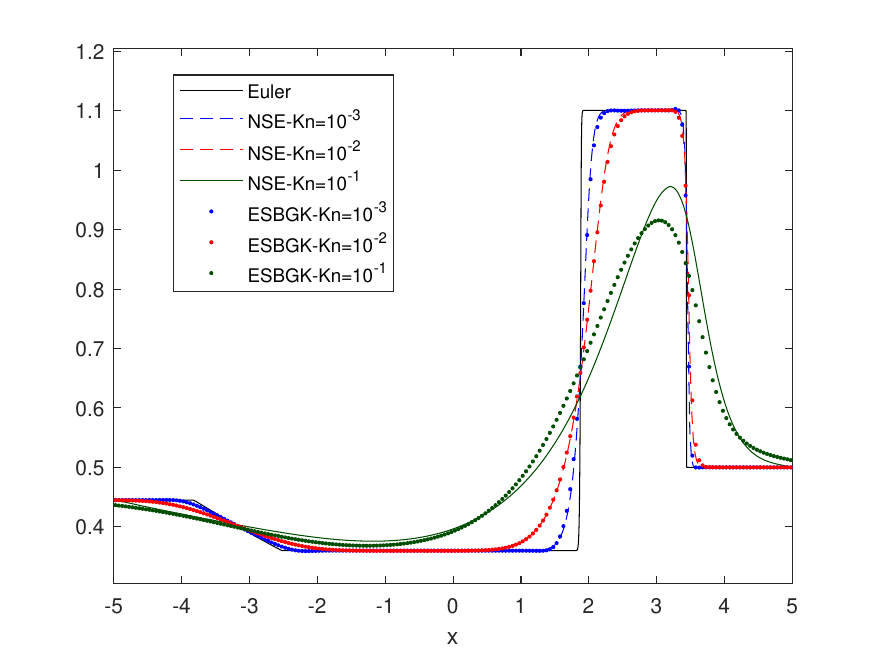}
		\subcaption{$\rho$}
	\end{subfigure}	
	\begin{subfigure}[h]{0.49\linewidth}
		\includegraphics[width=1\linewidth]{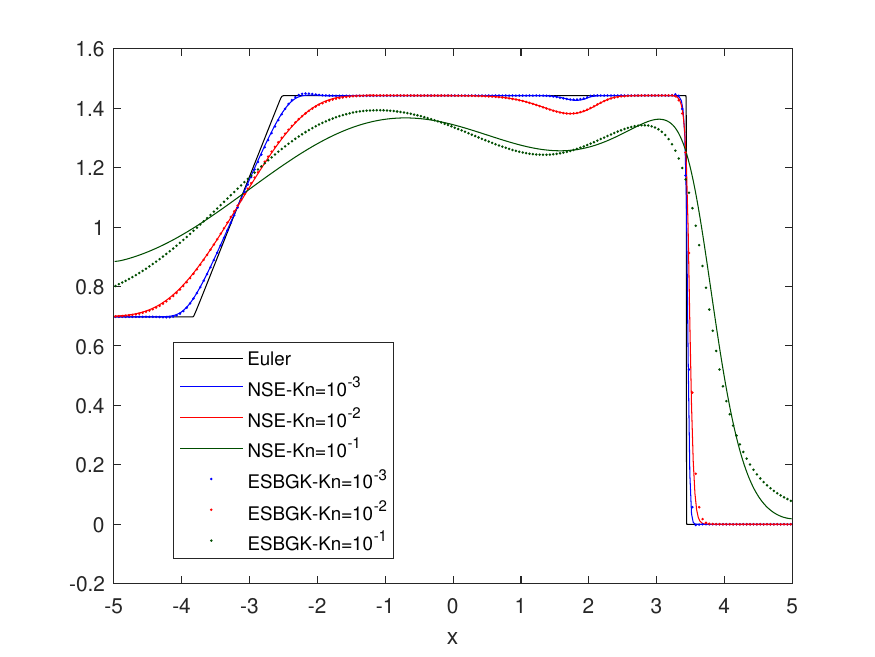}
		\subcaption{$u_1$}
	\end{subfigure}	
	\begin{subfigure}[h]{0.49\linewidth}
		\includegraphics[width=1\linewidth]{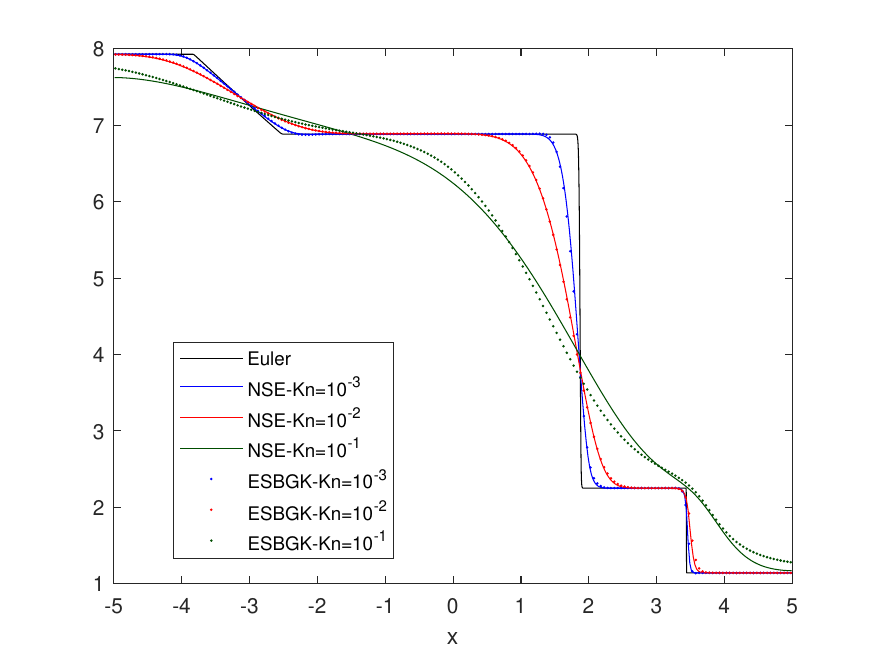}
		\subcaption{$T$}
	\end{subfigure}	
%	\begin{subfigure}[h]{0.32\linewidth}
%		\includegraphics[width=1\linewidth]{figures/Lax_comparison_sigma3d_Nx400_CFL2_first10_123}
%		\subcaption{$\sigma(u)$}
%	\end{subfigure}	
	\begin{subfigure}[h]{0.49\linewidth}
		\includegraphics[width=1\linewidth]{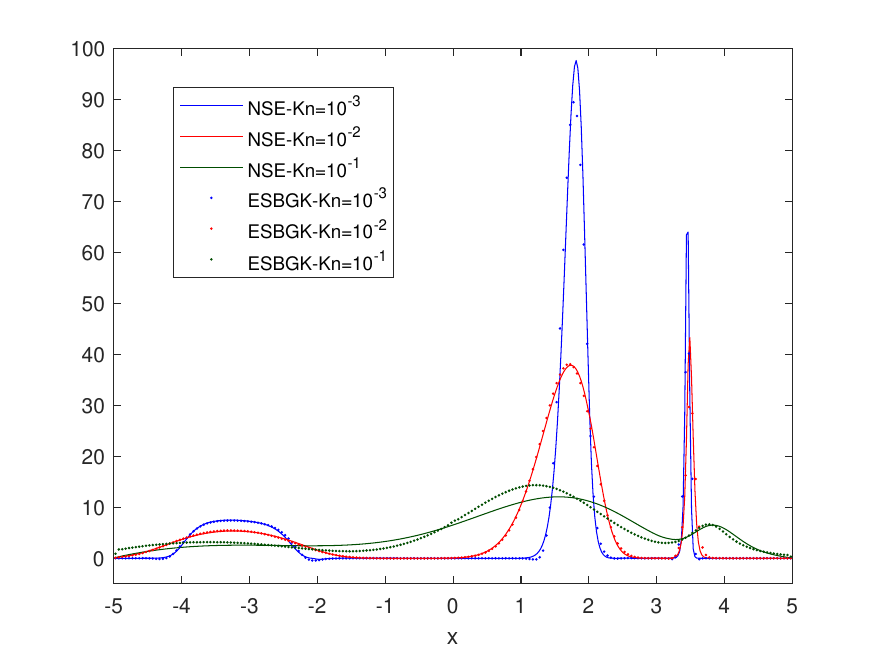}
		\subcaption{$Q$}
	\end{subfigure}	
	
	\caption{Lax shock tube problem.}\label{fig lax2}
\end{figure}

\section{Conclusion}
In this work, we have developed a class of semi-Lagrangian methods for the ES-BGK model of the BTE. By employing the semi-Lagrangian approach, we successfully circumvent the time step restrictions imposed by the convection term. For the treatment of the nonlinear stiff relaxation operator at small Knudsen numbers, we employed high-order $L$-stable DIRK or BDF methods. Notably, the proposed implicit schemes are designed to update solutions explicitly without the need for a Newton solver. Conservation has been attained by adopting a conservative reconstruction and weighted $L^2$-projection techniques. We provide conservation estimates and formal proof for the asymptotic limit of distribution function towards local Maxwellian for high order methods. Moreover, under restrictions on $f$ and $\mathcal{M}$, we show the consistency of first order time discretization between ES-BGK model and NSEs in the semi-Lagrangian framework. Through various numerical tests, we demonstrated the accuracy and efficiency of our methods.\\

\noindent{\bf Acknowledgement:}
This work has been partially supported by the Spoke 10 Future AI Research (FAIR) of the Italian Research Center funded by the Ministry of University and Research as part of the National Recovery and Resilience Plan (PNRR).  G. R. and S. B. would like to thank the Italian Ministry of University and Research (MUR) for the support of this research with funds coming from PRIN Project 2022 (N.\ 2022KA3JBA entitled  ``Advanced numerical methods for time dependent parametric partial differential equations with applications''), and from the project Spoke 1 ``FutureHPC \& BigData'' of the Italian Research Center on High-Performance Computing, Big Data and Quantum Computing(ICSC). S. B. acknowledge partial support from Italian Ministerial grant PRIN 2022 PNRR ``FIN4GEO: Forward and Inverse Numerical Modelling of hydrothermal systems in volcanic regions with application to geothermal energy exploitation.'', (No. P2022BNB97). G. Russo and S. Boscarino are members of the INdAM Research group GNCS. S. Y. Cho was supported by the National Research Foundation of Korea (NRF) grant funded by the Korea government (MSIT) (No. RS-2022-00166144). S.-B. Yun was supported by the National Research Foundation of Korea(NRF) grant funded by the Korean goverment(MSIT) (RS-2023-NR076676).\newline
%%%%%%%%%%%%%%%%%%%%%%%%%%%%%%%%%%%%%%%%%%%%%%%%%%%%%%%%%%%%%%%%%%%%%%%%%%%%%%%%%%%%%%%%%%%%%%%%%%%%%%%%%%%%%%%%%%%%%%%%%%%%%%%%%%%%%%%%%%%%%%%%%%%%%%%%%%%%%%%%%%%%%%%%%%%%%%%%%%%%%%%%%%%%%%%%%%%%%%%

\noindent{\bf Declaration of competing interest}\\
The authors declare that they have no known competing financial interests or personal relationships that could have appeared to influence the work reported in this paper.\newline

\noindent{\bf Data Availibility Statement}\\
No data was used for the research described in the article.

\bibliographystyle{amsplain}

\begin{thebibliography}{10}

%\bibitem{AAP} Andries P., Aoki K., Perthame B.:
%A consistent BGK-type model for gas mixtures. J. Statist. Phys. 106 (2002), no. 5-6, 993–1018.

%\bibitem{ABLP} Andries, P., Bourgat, J.-F., Le Tallec, P.; Perthame, B.: Numerical comparison between the Boltzmann and ES-BGK models for rarefied gases. Comput. Methods Appl. Mech. Engrg. 191 (2002), no. 31, 3369–3390.
%
\bibitem{ALPP} P. Andries, P. Le Tallec, J.-P. Perlat, B. Perthame: The Gaussian-BGK model of Boltzmann equation
with small Prandtl number. Eur. J. Mech. B Fluids {\bf 19} (2000), no. 6, 813-830.
%
%\bibitem{BDMMMM} Baranger, C., Dauvois, Y., Marois, G., Mathé, J., Mathiaud, J.,  Mieussens, L.: A BGK model for high temperature rarefied gas flows. European Journal of Mechanics-B/Fluids, 80 (2020), 1-12.
%

\bibitem{BLM} M. Bennoune,, M. Lemou,, L. Mieussens,:{\em Uniformly stable numerical schemes for the Boltzmann equation preserving the compressible Navier--Stokes asymptotics}. J. Comput. Phys. 227, 3781–3803 (2008)


%\bibitem{BIP} Bernard, F., Iollo, A., , G.: BGK polyatomic model for rarefied flows. J. Sci. Comput. 78 (2019), no. 3, 1893–1916.


\bibitem{MP} C. Mouhot, L. Pareschi, Fast algorithms for computing the Boltzmann collision operator, Math. Comput. 75 (2006) 1833–1852.
%
\bibitem{BGK} P. L. Bhatnagar, E. P. Gross and M. Krook: A model for collision processes in gases. Small amplitude
%process in charged and neutral one-component systems, Physical Revies, 94 (1954), 511-525.
%
%%\bibitem{BBGSP} Bobylev, A. V., Bisi, M., Groppi, M., Spiga, G., Potapenko, I. F.: A general consistent BGK model for gas mixtures. Kinet. Relat. Models 11 (2018), no. 6, 1377–1393.
%



\bibitem{BCR1} S. Boscarino, S. Y. Cho, G. Russo, A local velocity grid conservative semi-Lagrangian schemes for BGK model. Journal of Computational Physics, 460 (2022), 111178.
\bibitem{BCR2} S. Boscarino, S. Y. Cho, G. Russo, A conservative semi-Lagrangian method for inhomogeneous Boltzmann equation. Journal of Computational Physics, 498 (2024), 112633.


\bibitem{C} G. Capdeville, A central WENO scheme for solving hyperbolic conservation laws on non-uniform meshes, J. Comput. Phys. 227 (2008) 2977–3014.

\bibitem{CBRY1}
S. Y. Cho, S. Boscarino, G. Russo, S. B. Yun, Conservative semi-Lagrangian schemes for kinetic equations Part I: Reconstruction. Journal of Computational Physics, 432 (2021), 110159.

\bibitem{CBRY2}
S. Y. Cho, S. Boscarino, G. Russo, S. B. Yun, Conservative semi-Lagrangian schemes for kinetic equations Part II: Applications. Journal of Computational Physics, 436 (2021), 110281.

%\bibitem{BCRY} Boscarino, S., Cho, S.-Y., Russo, G. and Yun, S.-B.: High order conservative Semi-Lagrangian scheme for the BGK model of the Boltzmann equation, arXiv preprint 	arXiv:1905.03660 (2019).

\bibitem{BCGR} S. Y. Cho, S. Boscarino, M. Groppi, and G. Russo, Conservative semi-Lagrangian schemes for a general consistent BGK model for inert gas mixtures. Communications in Mathematical Sciences, 20(3), (2022) 695-725.


\bibitem{CGQRY} S. Y. Cho, M. Groppi, J. M. Qiu, G. Russo, and S.-B. Yun,  Conservative Semi-Lagrangian Methods for Kinetic Equations. In Active Particles, Volume 4 (pp. 283-420). Birkhäuser, Cham (2024).





%
%%\bibitem{B} Brull, S.: An ellipsoidal statistical model for gas mixtures. Commun. Math. Sci. 13 (2015), no. 1, 1–13.
%
%
%
%\bibitem{BS2000} Brull, S., Schneider, J.: A new approach for the ellipsoidal statistical model. Contin. Mech. Thermodyn. 20 (2008), no. 2, 63–74. 
%
%
%\bibitem{BS} Brull, S., Schneider, J. : On the ellipsoidal statistical model for polyatomic gases.
%Contin. Mech. Thermodyn. {\bf20} (2009), no. 8, 489–508.
%
%
%
%\bibitem{CJR} Caflisch, R. E., Jin, S., and Russo, G.:  accurate schemes for hyperbolic systems with relaxation. SIAM Journal on Numerical Analysis, 34(1) (1997), 246-281.
%
%\bibitem{CL} Cai, Z., Li, R.: The NRxx method for polyatomic gases. Journal of Computational Physics, 267, 63-91 (2014).
%
%
%
\bibitem{CMS} Crouseilles, N., Mehrenberger, M., Sonnendrücker, E.: Conservative semi-Lagrangian schemes for Vlasov equations. Journal of Computational Physics, 229(6) (2010), 1927-1953.
%
%
%\bibitem{DP} Dimarco, G., Pareschi, L.: Numerical methods for kinetic equations. Acta Numerica, 23 (2014), 369-520.
%
%
%\bibitem{CRG} Elisabetta, C., Ferretti, R., Russo, G.: "A Weighted Essentially Nonoscillatory, Large Time-Step Scheme for Hamilton--Jacobi Equations." SIAM Journal on Scientific Computing 27.3 (2005): 1071-1091.
%
%
\bibitem{FSB} F. Filbet, E. Sonnendrücker, and P. Bertrand: Conservative numerical schemes for the Vlasov equation. Journal of Computational Physics, 172(1) (2001), 166-187.

\bibitem{FJ} F. Filbet, and S. Jin, An asymptotic preserving scheme for the ES-BGK model of the Boltzmann equation. Journal of Scientific Computing, 46(2) (2011), 204-224.
%
%\bibitem{GRS2} Groppi, M., Russo, G.,  Stracquadanio, G.: Boundary conditions for semi-Lagrangian methods for the BGK model. Communications in Applied and Industrial Mathematics, 7(3) (2016), 138-164.
%
%
\bibitem{GRS} M. Groppi, G. Russo, G. Stracquadanio: High order semi-Lagrangian methods for the BGK equation. Commun. Math. Sci. 14 (2016), no. 2, 389–414.
%
%
%\bibitem{GRS3} Groppi, M., Russo, G.,  Stracquadanio, G.: Semi-Lagrangian Approximation of BGK Models for Inert and Reactive Gas Mixtures. In Meeting on Particle Systems and PDE's (pp. 53-80). Springer, Cham. (2016, November)
%
%
%

\bibitem{HG} E. Hairer, G. Warner, Solving Ordinary Differential Equations II: Stiff and Differential-Algebraic Problems, Springer, Berlin, 1996.

\bibitem{HZ} J. Hu, and X. Zhang, On a class of implicit–explicit runge–kutta schemes for stiff kinetic equations preserving the navier–stokes limit. Journal of Scientific Computing, 73 (2017), 797-818.

\bibitem{H} L. H. Holway: Kinetic theory of shock structure using and ellipsoidal distribution function. Rarefied Gas Dynamics, Vol. I (Proc. Fourth Internat. Sympos., Univ. Toronto, 1964), Academic Press, New York, (1966), pp. 193-215.
%
%
%
%\bibitem{J} Jin, S.: Runge-Kutta methods for hyperbolic conservation laws with stiff relaxation terms. Journal of Computational Physics, 122(1) (1995), 51-67.
%%\bibitem{M} Mieussens, L.: "Discrete velocity model and implicit scheme for the BGK equation of rarefied gas dynamics." Mathematical Models and Methods in Applied Sciences 10.08 (2000): 1121-1149.
%
%\bibitem{KPP} Klingenberg, C., Pirner, M., , G.: A consistent kinetic model for a two-component mixture of polyatomic molecules. arXiv preprint (2018) arXiv:1806.11486.
%
%
%\bibitem{KA} Kosuge, S., Aoki, K.: Shock-wave structure for a polyatomic gas with large bulk viscosity. Physical Review Fluids, 3(2) (2018), 023401.
%
%
%\bibitem{KAG} Kosuge, S., Aoki, K., Goto, T.: Shock wave structure in polyatomic gases: Numerical analysis using a model Boltzmann equation. In AIP Conference Proceedings (Vol. 1786, No. 1, p. 180004)(2016, November). AIP Publishing LLC.
%
%\bibitem{KKA} Kosuge, S., Kuo, H. W., Aoki, K.: A kinetic model for a polyatomic gas with temperature-dependent specific heats and its application to shock-wave structure. Journal of Statistical Physics, 177(2) (2019), 209-251.
%

\bibitem{LPR} D. Levy, G. Puppo, G. Russo, Compact central WENO schemes for multidimensional conservation laws, SIAM J. Sci. Comput. 22 (2000) 656–672.


%
%\bibitem{PR} Pareschi, L., Russo, G.: Implicit-explicit Runge-Kutta schemes for stiff systems of differential equations. Recent trends in numerical analysis, 3 (2000), 269-289.
%
%
%\bibitem{Park sa jun PHD thesis} Park, S.: Mathematical studies on the ellipsoidal BGK model of the Boltzmann equation for polyatomic particles, Thesis (Ph.D.)-- Sungkyunkwan university : Department of Mathematics 2018. 8
%
%
%
%\bibitem{PY} Park, S., Yun, S.-B.: "Cauchy problem for the ellipsoidal BGK model for polyatomic particles." Journal of Differential Equations, 266(11) (2019), 7678-7708.
%
%\bibitem{PY1} Park, S., Yun, S.-B.: Entropy production estimates for the polyatomic ellipsoidal BGK model. Appl. Math. Lett. 58 (2016), 26–33.
%
%%\bibitem{PY2} Park, S., Yun, S.-B.: On a positive decomposition of entropy production functional
%%for the polyatomic BGK model. Appl. Math. Lett. 77 (2018), 122-129.
%
%
%

\bibitem{RP} L. Pareschi, G. Russo, Numerical solution of the Boltzmann equation I: spectrally accurate approximation of the collision operator, SIAM J. Numer. Anal. 37
(2000) 1217–1245.

\bibitem{PP} B. Perthame, L. Pareschi, A Fourier spectral method for homogeneous Boltzmann equations, Transp. Theory Stat. Phys. 25 (1996) 369–382.


\bibitem{PP2} S. Pieraccini and G. Puppo: Implicit-explicit schemes for BGK kinetic equations,
J. Sci. Comput. 32 (2007) 1-28.
%
%
%\bibitem{P} Pirner, M.: A BGK model for gas mixtures of polyatomic molecules allowing for slow and fast relaxation of the temperatures. Journal of Statistical Physics, 173(6) (2018), 1660-1687.
%
%
%
\bibitem{QC} J. M. Qiu, and A. Christlieb: A conservative high order semi-Lagrangian WENO method for the Vlasov equation. Journal of Computational Physics, 229(4) (2010), 1130-1149.
%
%\bibitem{QS} Qiu, J. M., and  Shu, C. W.: Conservative high order semi-Lagrangian finite difference WENO methods for advection in incompressible flow. Journal of Computational Physics, 230(4) (2011), 863-889.
%
%\bibitem{RF} Russo, G. and Filbet, F.: Semilagrangian schemes applied to moving boundary problems for the BGK model of rarefied gas dynamics.
%

%\bibitem{RS} Russo, G. and Santagati, P.: A new class of large time step methods for the BGK models of the
%Boltzmann equation, arXiv:1103.5247v1, 2011.
%
%\bibitem{RSY} Russo, G. and Santagati, P. and Yun, S.-B.: Convergence of a semi-Lagrangian scheme for the BGK model of the Boltzmann equation. SIAM J. Numer. Anal. 50 (2012), no. 3, 1111–1135.
%
\bibitem{RY} G. Russo and S.-B. Yun: Convergence of a semi-Lagrangian scheme for the ellipsoidal BGK model of the Boltzmann equation. SIAM J. Numer. Anal., 56(6) (2018), 3580-3610.
%
%
%\bibitem{SP} Santagati, P.: High order semi-Lagrangian schemes for the BGK model of the Boltzmann equation. Department of Mathematics and Computer Science, University of Catania. Diss. PhD. thesis, 2007.
%
%%\bibitem{S} Shu, C.-W.: "Essentially non-oscillatory and weighted essentially non-oscillatory schemes for hyperbolic conservation laws." Advanced numerical approximation of nonlinear hyperbolic equations. Springer Berlin Heidelberg, 1998. 325-432.
%
%
\bibitem{SRB} E. Sonnendrücker, J. Roche, P. Bertrand, A. Ghizzo: The semi-Lagrangian method for the numerical resolution of the Vlasov equation. Journal of computational physics, 149(2) (1999), 201-220.
%
%
%\bibitem{TRQ} Xiong, T. , Russo, G. and Qiu, J. M.: Conservative Multi-Dimensional Semi-Lagrangian Finite Difference Scheme: Stability and Applications to the Kinetic and Fluid Simulations. arXiv:1607.07409.
%
\bibitem{Y} S.-B. Yun: Entropy production for ellipsoidal BGK model of the Boltzmann equation. Kinet. Relat. Models 9 (2016), no. 3, 605–619.
%
%\bibitem{Y1} Yun, S. B.: Ellipsoidal BGK model for polyatomic molecules near Maxwellians: A dichotomy in the dissipation estimate. Journal of Differential Equations, 266(9) (2019), 5566-5614.

\end{thebibliography}

\end{document}